\newtheorem{theorem}{Theorem}
\newtheorem{proposition}[theorem]{Proposition}
\newtheorem{lemma}[theorem]{Lemma}
\newtheorem{corollary}[theorem]{Corollary}
\newcommand{\R}{\mathbb{R}}
\newcommand{\N}{\mathbb{N}}
\renewcommand{\epsilon}{\varepsilon}
\newcommand{\eps}{\varepsilon}
\newcommand{\e}{\varepsilon}
\newcommand{\dpic}{d}
\renewcommand{\leq}{\leqslant}
\renewcommand{\le}{\leqslant}
\renewcommand{\ge}{\geqslant}
\begin{document}

\author[1,2]{Serena Dipierro}

\author[3]{Ovidiu Savin}

\author[1,2,4,5]{Enrico Valdinoci}

\affil[1]{\footnotesize School of Mathematics and Statistics,
University of Melbourne,
Richard Berry Building,
Parkville VIC 3010,
Australia \medskip } 

\affil[2]{\footnotesize Dipartimento di Matematica, Universit\`a degli studi di Milano,
Via Saldini 50, 20133 Milan, Italy \medskip }

\affil[3]{\footnotesize Department of Mathematics, Columbia University,
2990 Broadway,
New York NY 10027, USA \medskip}

\affil[4]{\footnotesize Weierstra{\ss} Institut f\"ur Angewandte
Analysis und Stochastik, Hausvogteiplatz 5/7, 10117 Berlin, Germany \medskip}

\affil[5]{\footnotesize Istituto di Matematica Applicata e Tecnologie Informatiche,
Consiglio Nazionale delle Ricerche,
Via Ferrata 1, 27100 Pavia, Italy \medskip }

\title{Local approximation of arbitrary functions \\ by solutions of nonlocal equations\thanks{It is a pleasure to thank
Xavier Cabr{\'e} for his interesting comments on a preliminary version
of this manuscript.
The
first author has been supported by Alexander von Humboldt Foundation.
The second author has been supported by NSF grant DMS- 1200701. The third author has been supported by ERC grant 277749 ``EPSILON Elliptic Pde’s and Symmetry of Interfaces and Layers for Odd Nonlinearities'' and PRIN grant 201274FYK7 ``Aspetti variazionali e perturbativi nei problemi differenziali nonlineari''.
Emails:
{\tt serydipierro@yahoo.it}, {\tt savin@math.columbia.edu}, {\tt enrico@mat.uniroma3.it} }}

\date{}

\maketitle

\begin{abstract}
We show that any function can be locally approximated
by solutions of prescribed linear equations of nonlocal type.
In particular, we show that every function is locally $s$-caloric, up to a small error.
The case of non-elliptic and non-parabolic operators is taken into account as well.
\end{abstract}

\bigskip\bigskip

{\footnotesize
\noindent{\sl 2010 Mathematics Subject Classification: }
35R11, 60G22, 35A35, 34A08.

\noindent{\sl Key words and phrases: }
Density properties, approximation, $s$-caloric functions.
}\bigskip

\section{Introduction}

In this paper, we will show that an arbitrary function can be locally
approximated, in the smooth sense, by $s$-caloric functions,
i.e. by solutions of the fractional heat equation in which
the diffusion is due to the $s$-power of the Laplacian, with~$s\in(0,1)$.

The precise result obtained is the following:

\begin{theorem}\label{CAS}
Let~$B_1\subset\R^n$ be the unit ball, $s\in(0,1)$, $k\in\N$ and 
$f:B_1\times(-1,1)\to\R$, with
$f\in C^k(\overline{B_1}\times[-1,1])$. 

Fix $\varepsilon>0$. Then there exists
$u_\epsilon=u\in C^\infty(B_1\times(-1,1))\cap C(\R^{n+1})$ which is compactly
supported in~$\R^{n+1}$ and
such that the following properties hold true:
\begin{eqnarray}
&& \label{i7EQ}
\partial_t u+(-\Delta)^s u=0\, {\mbox{ in $B_1\times(-1,1)$}}\\
{\mbox{and }} && 
\| u-f\|_{C^k(B_1\times(-1,1))}\le \varepsilon.\end{eqnarray}
\end{theorem}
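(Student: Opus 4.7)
The strategy I would use is an abstract duality argument, paralleling the one developed for the analogous elliptic statement ``every function is locally $s$-harmonic up to a small error.'' By the Hahn--Banach theorem, to prove that the class
\[
\mathcal U := \{\,u\in C^\infty(B_1\times(-1,1))\cap C_c(\R^{n+1}):\ (\partial_t+(-\Delta)^s)u=0\ \text{in}\ B_1\times(-1,1)\,\}
\]
is $C^k$-dense in $C^k(\overline{B_1\times[-1,1]})$, it suffices to establish the rigidity statement: if $\mu$ is a compactly supported distribution of order $\le k$ in $\overline{B_1\times[-1,1]}$ with $\langle\mu,u\rangle=0$ for every $u\in\mathcal U$, then $\mu=0$.

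To prove this rigidity, I would introduce the fundamental solution $\Phi_s(x,t)$ of $\partial_t+(-\Delta)^s$ on $\R^{n+1}$---which vanishes for $t<0$, is real-analytic in~$x$ for $t>0$, and has fast spatial decay---and consider the \emph{backward potential}
\[
v(x,t)\ :=\ \int \Phi_s(y-x,\tau-t)\,d\mu(y,\tau).
\]
Then $v$ is supported in $\{t\le 1\}$, is real-analytic in~$x$ on $\R^{n+1}\setminus\mathrm{supp}(\mu)$, and satisfies the adjoint equation $(-\partial_t+(-\Delta)^s)v=\mu$ distributionally. For any point $(x_*,t_*)$ outside the closed cylinder, the function $\phi_*(x,t):=\Phi_s(x_*-x,t_*-t)$ is smooth on $\overline{B_1\times[-1,1]}$ and $s$-caloric inside it (the singularity lying outside). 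If $\phi_*$ can be approximated by elements of $\mathcal U$ in the $C^k(\overline{B_1\times[-1,1]})$-topology, the annihilation hypothesis forces $v(x_*,t_*)=\int\phi_*\,d\mu=0$ for every such $(x_*,t_*)$. Consequently $v$ vanishes on the complement of the cylinder, real-analyticity in~$x$ slice by slice propagates the vanishing to all of $\R^{n+1}$, and the equation $L^*v=\mu$ yields $\mu=0$.

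The real obstacle is the approximation of $\phi_*$. Since $\phi_*$ itself is not compactly supported in $\R^{n+1}$, it does not lie in $\mathcal U$; I would therefore multiply it by a smooth cutoff and correct the small error introduced on the cylinder by solving an exterior-data problem for the nonlocal parabolic operator, exploiting the freedom to prescribe $u$ arbitrarily on $(\R^n\setminus B_1)\times\R$ and for $t<-1$. This freedom is the characteristic feature of the nonlocal setting, and is precisely the reason why the analogous density result fails for local parabolic equations. Controlling this exterior correction so that it is small in $C^k(\overline{B_1\times[-1,1]})$ while keeping the resulting function compactly supported and $s$-caloric inside the cylinder is the hard technical step, and it is where I expect the bulk of the paper's effort to go.
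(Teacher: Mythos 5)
Your route (Hahn--Banach duality against heat-kernel translates with poles outside the cylinder) is genuinely different from the paper's, which is constructive: there, boundary asymptotics of eigenfunctions of $(-\Delta)^s$ (Lemma~\ref{COR:GREEN2}) are used to show that jets of compactly supported solutions span every finite-order Taylor polynomial at a point (Lemma~\ref{GERM}), and one concludes by anisotropic scaling, linearity and Stone--Weierstra{\ss}. A Runge-type duality argument of the kind you sketch can be made to work, but as written it has a step that fails and leaves the genuinely hard points unproven. The rigidity step is the main problem. The claim that $\Phi_s$, hence the potential $v$, is real-analytic in $x$ off $\mathrm{supp}\,\mu$ is false for $s<1/2$: for fixed positive time the spatial derivatives of the kernel at the origin grow like $\Gamma\big(\tfrac{|\alpha|+n}{2s}\big)$, so the kernel is only Gevrey there, and already for $\mu$ a Dirac mass at $(y_0,\tau_0)$ the potential fails to be analytic in $x$ at points $(y_0,t)$, $t<\tau_0$, which lie off $\mathrm{supp}\,\mu$. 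More importantly, even granting analyticity off $\mathrm{supp}\,\mu$, analytic continuation can never propagate the vanishing of $v$ \emph{across} $\mathrm{supp}\,\mu$ (which may be the whole closed cylinder); knowing $v=0$ outside the cylinder only says $v$ is compactly supported, and this alone does not give $\mu=0$. What is actually needed is a unique-continuation-type ingredient, for instance: if $v$ and $(-\partial_t+(-\Delta)^s)v=\mu$ are both compactly supported then $v\equiv0$, which one can prove by Paley--Wiener (both space-time Fourier transforms are entire, and $(|\xi|^{2s}+i\theta)\hat v=\hat\mu$ is incompatible with $\hat v\not\equiv0$ because $|\xi|^{2s}$ is not smooth at $\xi=0$), or quote from the UCP literature for $\partial_t+(-\Delta)^s$. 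You supply neither.

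The approximation step you flag as hard is also more delicate than your sketch suggests, precisely because of the topology forced by duality: $\mu$ has order $\le k$ and may charge the lateral boundary $\partial B_1\times[-1,1]$, while the correction $w$ solving the Dirichlet problem in the cylinder with the small right-hand side produced by the cutoff is in general only $C^s$ up to the boundary, so smallness of $\|w\|_{L^\infty}$ does not yield smallness of $\langle\mu,w\rangle$; the standard repair is to run the whole argument with functions caloric in a slightly larger cylinder $B_{1+\delta}\times(-1-\delta,1+\delta)$ and use interior estimates to get $C^k$ smallness on $\overline{B_1}\times[-1,1]$. Two smaller points: elements of your $\mathcal U$ are only smooth in the open cylinder, so they need not have finite $C^k(\overline{B_1}\times[-1,1])$ norm, and the Hahn--Banach setup should be adjusted accordingly (again easiest with the enlarged cylinder); and $\phi_*(x,t)=\Phi_s(x_*-x,t_*-t)$ is \emph{backward} $s$-caloric --- the function consistent with your definition of $v$ and with $\mathcal U$ is $\Phi_s(x-x_*,t-t_*)$, i.e.\ heat kernels with poles outside the cylinder lying in the past.
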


We remark that the approximation result in Theorem~\ref{CAS}
reflects a purely nonlocal phenomenon, since in the local case the
solutions of the classical heat equation are particularly ``rigid''.
For example, solutions of the classical heat equation (i.e. solutions
of equation~\eqref{i7EQ}
when~$s=1$)
satisfy a local Harnack inequality which
prevents arbitrary oscillations (in particular,
these solutions cannot approximate a given function which
does not satisfy these oscillation constraints).
\medskip

On the contrary, in the nonlocal setting, solutions
of linear equations are flexible enough to approximate any given
function, and this approximation results hold true in a very general
context. As a matter of fact,
in our setting, Theorem~\ref{CAS} is just a particular case
of a much more general result that we provide in the forthcoming
Theorem~\ref{MAIN}.

To state this general theorem, we introduce now some
specific notation.
We will often use small fonts to denote ``local variables'',
capital fonts to denote ``nonlocal variables'', and Greek fonts to denote
the set of local and nonlocal variables altogether, namely\footnote{If ${\dpic}=0$, simply there are no ``local variables'' $(x_1,\dots,x_{\dpic})$
to take into account and $\nu=n_1+\dots+n_N$.} given
${\dpic}\in\N$, with ${\dpic}\ge0$, and $N\in\N$, with $N\ge1$,
we consider $x:=(x_1,\dots,x_{\dpic})\in\R^{\dpic}$ and $X
:=(X_1,\dots,X_N)\in\R^{n_1}\times\dots\times\R^{n_N}$
and we let $(x,X)\in \R^\nu$, with $\nu:=\dpic+n_1+\dots+n_N$.
To avoid confusions, when necessary, the $k$-dimensional unit ball will be denoted by $B_1^k$
(of course, when no confusion is possible, we will adopt the usual notation $B_1$).

Given $m=(m_1,\dots,m_{\dpic})\in\N^{\dpic}$ and $(a_1,\dots,a_{\dpic})\in
\R^{\dpic}\setminus\{0\}$,
we consider
the local operator
\begin{equation}\label{PAG81} 
\ell := \sum_{j=1}^{\dpic} a_j \partial^{m_j}_{x_j} .\end{equation}
Also, given $s=(s_1,\dots,s_N)\in (0,1)^N$ and $A=(A_1,\dots,A_N)\in\R^N\setminus\{0\}$,
we consider the nonlocal operator
\begin{equation}\label{PAG82} 
{\mathcal{L}} := \sum_{j=1}^N A_j\,(-\Delta_{X_j})^{s_j},\end{equation}
where we denoted by $(-\Delta_{X_j})^{s_j}$ the fractional Laplacian of order $s_j\in(0,1)$
in the set of variables $X_j\in\R^{n_j}$, namely
\begin{eqnarray*}
&&(-\Delta_{X_j})^{s_j} u(x,X_1,\dots,X_j,\dots,X_N)
\\ &&\qquad:= 
C(n_j,s_j)\,
\lim_{\varrho\searrow0}\limits\int_{Y\in\R^{n_j}\setminus B_\varrho^{n_j}}
\frac{u(x,X_1,\dots,X_j,\dots,X_N)-u(x,X_1,\dots,X_j+Y,\dots,X_N)
}{|Y|^{n_j+2s_j}}\,dY ,\end{eqnarray*}
where we used the normalized constant
$$ C(n_j,s_j):=\frac{4^{s_j}\,s_j\,\Gamma\left(\frac{n_j}{2}+s_j\right)}{\pi^{\frac{n_j}{2}}
\Gamma\left(1-s_j\right)},$$
being $\Gamma$ the Euler's $\Gamma$-function.

Then, we deal with the superposition\footnote{Of course, if $d=0$,
i.e. if there are no ``local variables'', the operator~$\Lambda$
in~\eqref{L8923421} coincides with the purely
nonlocal operator~$ {\mathcal{L}}$.}
of the local and the nonlocal operators, given by
\begin{equation}\label{L8923421} \Lambda:= \ell+{\mathcal{L}}\end{equation}
and we establish that {\em all functions are locally $\Lambda$-harmonic up to
a small error}, i.e. the functions in the kernel of the operator $\Lambda$
are locally dense in $C^k$. The precise result goes as follows:

\begin{theorem}\label{MAIN}
Let $k\in\N$ and $f:\R^\nu\to\R$, with
$f\in C^k(\overline{B_1^\nu})$. Fix $\varepsilon>0$. Then there exist
$u\in C^\infty(B_1^\nu)\cap C(\R^{\nu})$ and $R>1$ such that the following properties hold true:
\begin{eqnarray}
&& \label{TH:M1}
\Lambda u=0\, {\mbox{ in }} \,B_1^\nu,\\
&& \label{TH:M2}
\| u-f\|_{C^k(B_1^\nu)}\le \varepsilon\\
{\mbox{and }}
\label{TH:M3}
&& u=0 \,{\mbox{ in }}\, \R^\nu\setminus B_R^\nu.
\end{eqnarray}
\end{theorem}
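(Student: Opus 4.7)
My plan is to combine a Stone--Weierstrass reduction with a Hahn--Banach duality argument, extracting the required flexibility from the nonlocal blocks in~$\mathcal{L}$.

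\emph{Reduction to polynomials.} By density of polynomials in $C^k(\overline{B_1^\nu})$, or equivalently by mollifying a sufficiently high Taylor polynomial of~$f$, it suffices to prove Theorem~\ref{MAIN} when $f$ is a polynomial $P$ on~$\R^\nu$. That is, given $\varepsilon>0$, I seek a compactly supported $u\in C(\R^\nu)$ with $\Lambda u=0$ in $B_1^\nu$ and $\|u-P\|_{C^k(B_1^\nu)}\le\varepsilon$; the smoothness inside $B_1^\nu$ will then follow from standard interior regularity for $\Lambda$.

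\emph{Duality.} Introduce
$$
\mathcal{V} := \{\,u|_{\overline{B_1^\nu}}\,:\, u\in C(\R^\nu),\ u\text{ compactly supported},\ \Lambda u=0 \text{ in } B_1^\nu\,\} \subset C^k(\overline{B_1^\nu}).
$$
If $\overline{\mathcal{V}}$ were a proper subspace of $C^k(\overline{B_1^\nu})$, the Hahn--Banach theorem would produce a nonzero distribution $\mu$ of order $\le k$, compactly supported in $\overline{B_1^\nu}$, with $\langle\mu,u\rangle=0$ for all $u\in\mathcal{V}$. The task then becomes a rigidity statement: any such $\mu$ must vanish.

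\emph{Production of kernel elements.} For each $\phi\in C_c^\infty(\R^\nu)$ supported outside some large ball $B_R^\nu$, I would build $u_\phi\in\mathcal{V}$ depending linearly on~$\phi$. A natural recipe is $u_\phi=\phi+w_\phi$, where $w_\phi$ is a compactly supported correction adjusted so that $\Lambda u_\phi=0$ in $B_1^\nu$; the term $\mathcal{L}\phi|_{B_1^\nu}$, which is a finite sum of integrals against the kernels $|X_j-Y_j|^{-n_j-2s_j}$, has to be absorbed by $\Lambda w_\phi$ through an appropriate exterior data theory for the mixed local--nonlocal operator~$\Lambda$. The annihilation condition $\langle\mu,u_\phi\rangle=0$ for all such $\phi$ then forces a certain ``$\mathcal{L}$-potential'' of~$\mu$---a sum of Riesz-type potentials in the $X_j$ variables---to vanish on the open set $\{|(y,Y)|>R\}$. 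Since this potential is real-analytic on $\R^\nu\setminus\overline{B_1^\nu}$, unique continuation extends the vanishing to the full complement of $\overline{B_1^\nu}$, and a final injectivity step based on inverting $(-\Delta_{X_j})^{s_j}$ variable-by-variable concludes that $\mu\equiv 0$, contradicting $\mu\neq 0$.

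\emph{Main obstacle.} The delicate point is the construction of $u_\phi$ with a clean, linear, and explicit dependence on the exterior datum~$\phi$. The local part $\ell$ contributes no nonlocal tail and has an enormous polynomial kernel, so the correction $w_\phi$ must be engineered in such a way that the nonlocal blocks in $\mathcal{L}$ simultaneously absorb the tail of~$\phi$ and the local derivatives $\ell w_\phi$; setting up this existence-with-estimates result, and then verifying that the induced ``$\mathcal{L}$-potential'' of~$\mu$ is genuinely analytic and separately injective in each group of variables~$X_j$, are the least automatic steps of the argument.
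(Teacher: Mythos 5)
Your own ``main obstacle'' is in fact a fatal gap, not a technicality. The duality scheme hinges on producing, for every exterior datum $\phi$, a compactly supported $u_\phi$ with $\Lambda u_\phi=0$ in $B_1^\nu$ depending linearly (and with estimates) on $\phi$; that is an exterior-value solvability theory for $\Lambda$, and none is available here. The theorem allows sign-changing coefficients $A_j$ and a completely arbitrary local part $\sum_j a_j\partial^{m_j}_{x_j}$ (odd orders, arbitrary signs), so $\Lambda$ may be of hyperbolic type, e.g. $\sum_j\partial^2_{x_j}+(-\Delta_{X_1})^{s_1}$ or $(-\Delta_{X_1})^{s_1}-(-\Delta_{X_2})^{s_2}$: there is no variational formulation, maximum principle or Perron-type method with which to absorb $\mathcal{L}\phi$ by a correction $w_\phi$, and the ``standard interior regularity'' you invoke to get $u\in C^\infty(B_1^\nu)$ simply does not exist for such operators. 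The rigidity half has the same defect: the ``$\mathcal{L}$-potential'' of $\mu$ is a sum over $j$ of partial Riesz potentials, each smoothing only in the $X_j$-group of variables and acting as the identity in the others, so it is not real-analytic (nor even smooth) away from $\operatorname{supp}\mu$ as a function of all variables, and when the $A_j$ have different signs the symbol $\sum_j A_j|\xi_j|^{2s_j}$ vanishes on a large set, so the proposed ``inversion variable-by-variable'' does not yield injectivity. In short, your blueprint is the Runge-approximation/duality argument that is known to work for a single elliptic fractional Laplacian; it does not go through, as written, for the mixed and possibly non-elliptic $\Lambda$ of the theorem.

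For comparison, the paper avoids every solvability question by constructing explicit kernel elements through separation of variables: products $\tau(x)\,v_1(x_1)\cdots v_{\dpic}(x_{\dpic})\,\phi_1(X_1+e_1+\eps Y_1)\cdots\phi_N(X_N+e_N+\eps Y_N)$, where each $v_j$ solves the ODE $\partial^{m_j}_{x_j}\bar v_j=-\bar a_j\bar v_j$ so that the local part contributes $-\sum_j|a_j|t_j^{m_j}$, and the $\phi_j$ are rescaled first Dirichlet eigenfunctions whose eigenvalues are tuned so that the total contribution cancels -- a construction that needs no sign, ellipticity or parabolicity assumptions. A precise boundary asymptotics for $\partial^\alpha\phi_\star$ near $\partial B_1$ (of order $\eps^{s-|\alpha|}$, obtained from the Green function in the appendices) then shows that the derivatives at the origin of such solutions span all of $\R^{K'}$ (Lemma~\ref{GERM}); the monomial case follows by choosing $w$ with one prescribed nonzero derivative and applying the anisotropic scaling $x_j\mapsto\eta^{1/m_j}x_j$, $X_j\mapsto\eta^{1/(2s_j)}X_j$, and the general case by linearity and Stone--Weierstra{\ss}, which is the only step your reduction shares with the paper. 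If you wish to salvage a duality proof, you would at least have to restrict to a genuinely elliptic configuration (all $A_j>0$ and a benign local part) and develop both the exterior problem and the unique continuation for the mixed potential in that setting; for the theorem as stated, it is precisely the paper's constructive route that renders the absence of ellipticity harmless.
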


It is interesting to remark that not only
Theorem~\ref{MAIN}
immediately implies Theorem~\ref{CAS} as a particular case,
but also that Theorem~\ref{MAIN} does not require any ellipticity
or parabolicity on the operator, which is perhaps a rather surprising fact. Indeed, we stress that
Theorem~\ref{MAIN} is valid also for
operators with hyperbolic structures, and comprises the cases when
$$ \Lambda = \sum_{j=1}^{\dpic} \partial^2_{x_j} + (-\Delta_{X_1})^{s_1}$$
and when
$$ \Lambda = (-\Delta_{X_1})^{s_1}-(-\Delta_{X_2})^{s_2}$$
with $s_1$, $s_2\in(0,1)$.
In this sense, the nonlocal
features of the fractional Laplacian in some variables
dominate the possible
elliptic/parabolic/hyperbolic structure of the operator.
\medskip

The first result in the direction of Theorem \ref{MAIN} has been recently obtained in
\cite{all-fcts-are-sh},
where Theorem \ref{MAIN} was proved in the special case in which $d=0$ and $N=1$
(that is, when there are no ``local variables'' and only one
``nonlocal variable''). Results related to that in \cite{all-fcts-are-sh}
have been obtained in \cite{bucur-caputo} for other types of nonlocal operators,
such as the ones driven by the Caputo derivative.

We also observe that these ``abstract'' approximation results
have also ``concrete'' applications, for instance in mathematical biology:
for example, they show that biological species with nonlocal strategies
can better plan their distribution in order to exhaust a given resource in
a strategic region, thus avoiding any unnecessary waste of resource, see e.g.
\cite{massaccesi, logistic}.\medskip

In this sense, we mention the following application of
Theorem \ref{CAS}:

\begin{theorem}\label{hjSI} Let $s\in(0,1)$ and $k\in\N$. Fix $\e>0$.
Let $\sigma\in C^k\big(\overline{B_1}\times[-1,1], \;(0,+\infty)\big)$.
Then, there exist
$u_\e$, $\sigma_\e\in C^\infty\big({B_1}\times(-1,1), \;(0,+\infty)\big)\cap C(\R^{n+1})$
which are compactly supported and such that
\begin{eqnarray}
\label{E p1}
&& \partial_t u_\e+(-\Delta)^s u_\e 
=(\sigma_\e-u_\e)u_\e \quad{\mbox{ in }} {B_1}\times(-1,1),\\
\label{E p2}
&& u_\e =\sigma_\e \quad{\mbox{ in }} {B_1}\times(-1,1)\\
{\mbox{and }} \label{E p3}
&& \|\sigma-\sigma_\e\|_{C^k\big({B_1}\times(-1,1)\big)}\le \e.
\end{eqnarray}
\end{theorem}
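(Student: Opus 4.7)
The key observation is that condition~\eqref{E p2} forces the right-hand side of~\eqref{E p1} to vanish identically on~$B_1\times(-1,1)$: if $u_\e=\sigma_\e$ there, then $(\sigma_\e-u_\e)u_\e\equiv 0$, and the logistic-type equation~\eqref{E p1} collapses to the homogeneous fractional heat equation $\partial_t u_\e+(-\Delta)^s u_\e=0$ in $B_1\times(-1,1)$. This is exactly the equation governed by Theorem~\ref{CAS}.

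Accordingly, my plan is to apply Theorem~\ref{CAS} directly with $f:=\sigma$ and with a sufficiently small tolerance $\e'\le\e$ to be chosen. This yields a function $u_\e\in C^\infty(B_1\times(-1,1))\cap C(\R^{n+1})$, compactly supported in $\R^{n+1}$, satisfying $\partial_t u_\e+(-\Delta)^s u_\e=0$ in $B_1\times(-1,1)$ and $\|u_\e-\sigma\|_{C^k(B_1\times(-1,1))}\le\e'$. I then simply define
\[
\sigma_\e:=u_\e\quad\text{on all of }\R^{n+1}.
\]
With this choice, \eqref{E p2} holds tautologically, \eqref{E p1} holds because its right-hand side vanishes, and \eqref{E p3} follows from the approximation estimate provided by Theorem~\ref{CAS}, as soon as $\e'\le\e$.

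The only genuine point to verify is the positivity requirement, namely that $u_\e$ and $\sigma_\e$ take values in $(0,+\infty)$ on $B_1\times(-1,1)$. Since $\sigma$ is continuous and strictly positive on the compact set $\overline{B_1}\times[-1,1]$, the quantity $c_0:=\min_{\overline{B_1}\times[-1,1]}\sigma$ is strictly positive. Choosing
\[
\e':=\min\bigl(\e,\,c_0/2\bigr)
\]
in the application of Theorem~\ref{CAS} forces $u_\e\ge\sigma-\e'\ge c_0/2>0$ pointwise on $B_1\times(-1,1)$, so both $u_\e$ and $\sigma_\e=u_\e$ lie in $(0,+\infty)$ there, as required.

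I do not anticipate any real obstacle: the whole argument is an unpacking of Theorem~\ref{CAS}, and the only mildly delicate step is the selection of $\e'$ small enough to guarantee positivity alongside the prescribed $C^k$ closeness. The conceptual content is the observation that the nonlinear reaction term $(\sigma_\e-u_\e)u_\e$ is switched off by the constraint~\eqref{E p2}, reducing the problem to the purely linear approximation result already established.
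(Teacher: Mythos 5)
Your proposal is correct and follows essentially the same route as the paper, which obtains Theorem \ref{hjSI} directly from Theorem \ref{CAS} by taking $f:=\sigma$ and setting $\sigma_\e:=u_\e$, so that the reaction term $(\sigma_\e-u_\e)u_\e$ vanishes identically. Your additional step of shrinking the tolerance to $\min(\e, c_0/2)$ with $c_0:=\min_{\overline{B_1}\times[-1,1]}\sigma$ to guarantee positivity of $u_\e$ on $B_1\times(-1,1)$ is a correct (and welcome) explicit treatment of a detail the paper leaves implicit.
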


The biological interpretation of Theorem \ref{hjSI} is that $u_\e$
represents the distribution of a population, which satisfies a logistic
equation as given in \eqref{E p1}. The function $\sigma$ can be thought as a resource
(which in turn produces a birth rate proportional to it). The meaning
of Theorem \ref{hjSI} is that, possibly replacing the original 
resource with a slightly different one (as prescribed quantitatively
by \eqref{E p3}), the population can consume all the resource (as given by
\eqref{E p2}).
\medskip

As a matter of fact, from \eqref{E p1} and~\eqref{E p2}, one can write that
$$ \partial_t u_\e+(-\Delta)^s u_\e =0 \quad{\mbox{ in }} {B_1}\times(-1,1).$$
Notice that, in our setting, Theorem \ref{hjSI}
is a simple consequence of
Theorem \ref{CAS} 
(by taking there $f:=\sigma$, and defining~$\sigma_\e:=u_\e$). More general
interactions can also be considered, see e.g. Theorem 1.8 in \cite{logistic}.\medskip

The rest of the paper is organized as follows.

Section \ref{SPAN} contains the main argument towards the proof of Theorem \ref{MAIN},
that is that solutions of nonlocal equations can span the largest possible
space with their derivatives (we remark that this is a purely nonlocal argument,
since, for instance, harmonic functions obviously
cannot span strictly positive second derivatives). The argument to prove
this fact is based on a ``separation of variables'' method. Namely,
we will look for solutions of nonlocal equations in the form of products
of functions depending on ``local'' and ``nonlocal variables''.
The nonlocal part of the function is built by the eigenfunctions
of the nonlocal operators (whose boundary behavior is somehow singular
and can be quantified by the estimates of the previous sections), while
the local part of the function is constructed by an ordinary differential
equation which is designed to compensate all the coefficients of the operator
in the appropriate way.

The proof of Theorem \ref{MAIN} is then discussed step by step,
first in Section \ref{MAonaosec}, where $f$ is supposed to be a monomial,
then in Section \ref{OJLA:S}, where $f$ is supposed to be a polynomial,
and finally completed in the general case in Section \ref{puaoisd687456}.

In the course of the proof of the main results, we
also use a precise boundary behavior of solutions of nonlocal equations:
these estimates depend in turn on some technical
boundary asymptotics
of the Green function of the fractional Laplacian. For the facility
of the reader, the technical proofs of these auxiliary
results are contained in Appendices~A and~B.

\section{Spanning the whole of the Euclidean space with $\Lambda$-harmonic functions}\label{SPAN}

In this section, we show that $\Lambda$-harmonic functions
span the whole
of the Euclidean space (this is a purely nonlocal phenomenon, since, for instance, the second
derivatives of harmonic functions have to satisfy a linear equation, and therefore
are forced to lie in a proper subspace).

For this, to start with, 
we give a precise expansion
of the first eigenfunction of the fractional Laplacian:

\begin{lemma}\label{COR:GREEN2}
Let $e\in \partial B_1$.
Let $\phi_\star$ be the first eigenfunction
for $(-\Delta)^s$, normalized to be positive and
such that $\|\phi_\star\|_{L^2(B_1)}=1$, and let $\lambda_\star>0$ be the corresponding eigenvalue.

Let
\begin{equation}\label{HOES2}
\kappa_\star:= 2^s\,
\kappa(n,s)\, \int_{B_1} \phi_\star(z)\,
\frac{(1-|z|^2)^s }{
s\,|z-e|^n } \,dz\in(0,+\infty).\end{equation}
Then
$$ \lim_{\eps\searrow0} \eps^{|\alpha|-s}  
\partial^\alpha\phi_\star(e+\eps X)= 
(-1)^{|\alpha|}\,\kappa_\star\,\lambda_\star \,s\,(s-1)
\dots(s-|\alpha|+1)\,
e_1^{\alpha_1}\dots e_n^{\alpha_n}\,(-e\cdot X)_+^{s-|\alpha|}\,
$$
in the sense of distribution,
for any~$\alpha\in\N^n$.
\end{lemma}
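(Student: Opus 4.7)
The plan is to extract the boundary asymptotics of $\phi_\star$ from the Green-function representation of the eigenvalue equation, and then propagate the result to arbitrary derivatives by distributional differentiation.

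Since $(-\Delta)^s\phi_\star=\lambda_\star\phi_\star$ in $B_1$ with $\phi_\star\equiv 0$ outside, Riesz's explicit formula for the Green function $G$ of $(-\Delta)^s$ on $B_1$ gives
$$\phi_\star(x)=\lambda_\star\int_{B_1}G(x,z)\,\phi_\star(z)\,dz.$$
For $x=e+\eps X$ with $e\in\partial B_1$, $-e\cdot X>0$ and $\eps$ small, one has $1-|x|^2=2\eps(-e\cdot X)+O(\eps^2)$, and the known expression of $G$ (essentially $\kappa(n,s)\,|x-z|^{2s-n}$ times an incomplete Beta integral with upper limit $(1-|x|^2)(1-|z|^2)/|x-z|^2$) yields the leading order
$$G(e+\eps X,z)=\frac{\kappa(n,s)\,2^s\,\eps^s\,(-e\cdot X)^s\,(1-|z|^2)^s}{s\,|e-z|^n}\,\bigl(1+o(1)\bigr)$$
on compact subsets of $B_1$. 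Integrating against $\phi_\star(z)$ and recognizing the resulting integral as $\kappa_\star$, I obtain the case $\alpha=0$,
$$\eps^{-s}\,\phi_\star(e+\eps X)\longrightarrow \kappa_\star\,\lambda_\star\,(-e\cdot X)_+^s,$$
at least in $L^1_{\rm loc}(\R^n)$. The uniform control needed to commute the limit with the integral (handling both the singularity of $|x-z|^{2s-n}$ at $z=x$ and the degeneracy of $(1-|z|^2)^s$ as $z\to\partial B_1$) is precisely the role played by the Green-function estimates developed in Appendices~A and~B.

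The general $\alpha$ then follows by differentiating the limit in the sense of distributions. Using the chain-rule identity $\partial^\alpha_X[\phi_\star(e+\eps X)]=\eps^{|\alpha|}(\partial^\alpha\phi_\star)(e+\eps X)$, the $L^1_{\rm loc}$ convergence just established gives
$$\eps^{|\alpha|-s}\,(\partial^\alpha\phi_\star)(e+\eps X)\longrightarrow \kappa_\star\,\lambda_\star\,\partial^\alpha_X(-e\cdot X)_+^s\quad\text{in }\mathcal{D}'(\R^n),$$
and a direct one-variable chain-rule computation on $t\mapsto t_+^s$ produces
$$\partial^\alpha_X(-e\cdot X)_+^s=(-1)^{|\alpha|}\,e_1^{\alpha_1}\cdots e_n^{\alpha_n}\,s(s-1)\cdots(s-|\alpha|+1)\,(-e\cdot X)_+^{s-|\alpha|},$$
interpreted distributionally (in the Hadamard finite-part sense) when $|\alpha|\ge 2$. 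The principal obstacle is thus concentrated in the zeroth-order step: producing the precise leading constant $\kappa_\star$ uniformly in a neighborhood of $e$, which rests on the boundary Green-function asymptotics deferred to the appendices.
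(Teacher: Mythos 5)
Your proposal is correct and follows essentially the same route as the paper: the zeroth-order asymptotics $\eps^{-s}\phi_\star(e+\eps X)\to\kappa_\star\lambda_\star(-e\cdot X)_+^s$ are obtained from the Green-function representation of $(-\Delta)^s\phi_\star=\lambda_\star\phi_\star$ together with the boundary expansion of $G$ (this is exactly the content of the paper's Proposition on boundary behavior and of Lemma~\ref{GOALEM} in Appendix~B, with the uniform control you defer supplied there via the global $C^s$ bound on $\phi_\star$ and a Vitali/dominated-convergence argument), and the general $\alpha$ follows, as in the paper, by the scaling identity $\partial^\alpha_X[\phi_\star(e+\eps X)]=\eps^{|\alpha|}(\partial^\alpha\phi_\star)(e+\eps X)$ and passing the derivatives onto test functions, with $(-e\cdot X)_+^{s-|\alpha|}$ read distributionally for $|\alpha|\ge2$, just as in the statement. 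The only differences are of bookkeeping (you phrase the zeroth-order convergence in $L^1_{\rm loc}$, the paper works pointwise along rays $X=\rho\omega$ and dominates by $C\rho^s$ inside the pairing), not of substance.
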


Not to interrupt the proof of the main results, we postpone the
proof of Lemma~\ref{COR:GREEN2} in Appendix~A.
We remark that upper bounds on~$\kappa_\star$
follow from the regularity results in~\cite{MR3168912}
(for our purposes, we also need to obtain
a precise first order expansion with a nonvanishing
order term).\medskip

To present the proof of our main result, we introduce
some notation.
We consider multi-indices
$i=(i_1,\dots,i_{\dpic})\in\N^{\dpic}$ and $I=(I_1,\dots,I_N)\in \N^{n_1}\times\dots\times\N^{n_N}$.
We will write
\begin{equation}\label{UIAiota}
\iota:=(i,I)=(i_1,\dots,i_{\dpic},I_1,\dots,I_N)\in\N^\nu.\end{equation}
As usual, we set $|\iota|:=i_1+\dots+i_{\dpic}+|I_1|+\dots|I_N|$,
where $|I_1|:=I_{1,1}+\dots+I_{1,n_1}$, and so on.
We also write
$$ \partial^\iota w := \partial^{i_1}_{x_1}\dots\partial^{i_{\dpic}}_{x_{\dpic}}
\partial^{I_1}_{X_1}\dots \partial^{I_N}_{X_N} w.$$
We consider the span of the derivatives of $\Lambda$-harmonic functions,
with derivatives up to order $K$. For this, we denote by
$\partial^K w$ the vector field collecting in its entry all the derivatives
of the form~$\partial^\iota w$ with~$|\iota|\le K$ (in some prescribed
order). Notice that $\partial^K w$ is a vector field
on the Euclidean space~$\R^{K'}$ for some~$K'\in\N$ (of course, $K'$ depends on~$K$).

Then we denote by ${\mathcal{H}}$ the family of all functions
$w\in C(\R^\nu)$ that are compactly supported in $\R^\nu$ and
for which there exists a neighborhood ${\mathcal{N}}$ 
of the origin in $\R^\nu$
such that $w\in C^{\infty}({\mathcal{N}})$
and $\Lambda w=0$ in ${\mathcal{N}}$.

Finally, we define the set
\begin{equation}\label{44bis} {\mathcal{V}}_K \,:=\,
\big\{ \partial^K w(0)
{\mbox{ for all $w\in{\mathcal{H}}$}}\big\}.\end{equation}
By construction ${\mathcal{V}}_K\subseteq\R^{K'}$,
and we have:

\begin{lemma}\label{GERM} It holds that ${\mathcal{V}}_K=\R^{K'}$.
\end{lemma}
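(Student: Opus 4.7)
The strategy is a hyperplane-separation argument. First, $\mathcal{V}_K$ is plainly a linear subspace of $\R^{K'}$: the set $\mathcal{H}$ is closed under addition (since $\Lambda$ is linear and the intersection of two neighborhoods of the origin is still a neighborhood), and $w\mapsto\partial^K w(0)$ is linear. If the conclusion failed, there would exist a nonzero covector $c=(c_\iota)_{|\iota|\le K}$ such that
$$L(w) := \sum_{|\iota|\le K} c_\iota\,\partial^\iota w(0) = 0 \qquad\text{for every }w\in\mathcal{H}.$$
I would derive a contradiction by producing an $\eps$-parametrized family $w_\eps\in\mathcal{H}$ on which $L$ cannot vanish uniformly in the free parameters as $\eps\searrow0$.

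To build $w_\eps$, I separate variables. For each $j=1,\dots,N$, let $\phi_{\star,j}$ and $\lambda_{\star,j}$ be as in Lemma~\ref{COR:GREEN2} applied to $(-\Delta_{X_j})^{s_j}$ on $B_1^{n_j}$. Choose free parameters $e_j\in\partial B_1^{n_j}$ and $X_{0,j}\in\R^{n_j}$ with $e_j\cdot X_{0,j}>0$, together with small $\eps>0$, and set
$$\psi_j(X_j) := \phi_{\star,j}\bigl(e_j + \eps(X_j - X_{0,j})\bigr).$$
For $\eps$ small the preimage $e_j-\eps X_{0,j}$ of the origin sits in the interior of $B_1^{n_j}$, so $\psi_j$ is smooth near $0$, and by scaling $(-\Delta_{X_j})^{s_j}\psi_j = \eps^{2 s_j}\lambda_{\star,j}\,\psi_j$ there. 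For the local factor take $v(x)=e^{\beta\cdot x}$ near $0$ (smoothly cut off far away) with $\beta\in\R^{\dpic}$ solving
$$\sum_{i=1}^{\dpic} a_i\,\beta_i^{m_i} = -\sum_{j=1}^N A_j\,\eps^{2 s_j}\,\lambda_{\star,j}.$$
Then $w_\eps(x,X):=v(x)\prod_j\psi_j(X_j)$ satisfies $\Lambda w_\eps=0$ in a neighborhood of $0$ and, after cut-off, lies in $\mathcal{H}$. The degenerate case $\dpic=0$ is handled by tensoring the construction of \cite{all-fcts-are-sh} across the $N$ factors: replace each $\psi_j$ by a compactly supported $(-\Delta_{X_j})^{s_j}$-harmonic function built from exterior data, which makes $\mathcal{L}w=0$ automatically.

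To close, push $\eps\searrow 0$ in $L(w_\eps)=0$. Since $\phi_{\star,j}$ is smooth in the interior of $B_1^{n_j}$ and $e_j-\eps X_{0,j}$ sits at interior distance $\sim\eps\,(e_j\cdot X_{0,j})$ from $\partial B_1^{n_j}$, the distributional conclusion of Lemma~\ref{COR:GREEN2} applied at $X=-X_{0,j}$ becomes pointwise, giving
$$\eps^{|I_j|}(\partial^{I_j}\phi_{\star,j})(e_j-\eps X_{0,j}) = \eps^{s_j}\,T_{I_j}(e_j,X_{0,j})\bigl(1+o(1)\bigr),$$
with $T_{I_j}(e_j,X_{0,j}) := (-1)^{|I_j|}\kappa_{\star,j}\lambda_{\star,j}\,s_j(s_j-1)\cdots(s_j-|I_j|+1)\prod_k (e_j)_k^{(I_j)_k}(e_j\cdot X_{0,j})^{s_j-|I_j|}$. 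Dividing by the common prefactor $\eps^{s_1+\dots+s_N}$ and sending $\eps\to 0$, the hypothesis $L(w_\eps)=0$ becomes
$$\sum_{i,I} c_{i,I}\,\beta^i\,\prod_{j=1}^N T_{I_j}(e_j,X_{0,j}) = 0$$
for every admissible $(\beta,e_j,X_{0,j})$. The distinct non-integer exponents $s_j-|I_j|$ in $(e_j\cdot X_{0,j})$, together with the distinct sphere-monomials $\prod_k(e_j)_k^{(I_j)_k}$, separate the contributions of different $I$; this reduces the identity, for each fixed $I$, to $\sum_i c_{i,I}\beta^i=0$ on the hypersurface traced by $\beta$. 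Varying $\eps$ sweeps out an open family of such hypersurfaces, on which no nonzero polynomial of bounded degree can vanish identically; hence $c_{i,I}=0$ for every $(i,I)$, the desired contradiction.

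The hardest step is this final separation: both upgrading the distributional limit of Lemma~\ref{COR:GREEN2} to a genuine pointwise expansion (via interior regularity of $\phi_{\star,j}$) and verifying that the joint family of parameters $(\beta,e_j,X_{0,j},\eps)$ is rich enough to isolate every coefficient $c_{i,I}$.
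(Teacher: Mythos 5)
Your overall scheme is the same as the paper's (hyperplane separation, products of eigenfunctions of the $(-\Delta_{X_j})^{s_j}$ evaluated at distance $\sim\eps$ from $\partial B_1^{n_j}$, a local factor used to balance the eigenvalues, the $d=0$ case via the result of \cite{all-fcts-are-sh}), but the local-variable part of your construction breaks down. You force the local factor $e^{\beta\cdot x}$ to satisfy $\sum_i a_i\beta_i^{m_i}=-\sum_j A_j\eps^{2s_j}\lambda_{\star,j}$. First, this equation may have no real solution at all: already for $\Lambda=\partial_x^2+(-\Delta_X)^s$ (that is $d=1$, $m_1=2$, $a_1=A_1=1$) it requires $\beta^2=-\eps^{2s}\lambda_\star<0$, so your family $w_\eps$ is empty in one of the model cases the theorem is meant to cover. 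Second, even when solvable, $\beta=\beta(\eps)$ is enslaved to a level set of $q(\beta):=\sum_i a_i\beta_i^{m_i}$ at a level tending to $0$, so the limiting identity $\sum_i c_{i,I}\,\beta^i=0$ is only obtained for $\beta$ in $q^{-1}(0)$, which can be a single point (e.g.\ $\{0\}$ when $d=1$, $m_1=2$) and is in any case a proper algebraic set on which a nonzero polynomial (for instance any multiple of $q$) may vanish. Your final claim that ``varying $\eps$ sweeps out an open family of hypersurfaces'' conflates the pre-limit identity (which still contains unseparated error terms and all the $I$'s) with the post-limit one, in which $\eps$ has already been sent to $0$; as written, the coefficients $c_{i,I}$ with $i\neq0$ are not forced to vanish. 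The paper avoids exactly this trap by (a) taking local factors $v_j(x_j)=\bar v_j(t_jx_j)$ built from the ODE $\partial^{m_j}_{x_j}\bar v_j=-\bar a_j\bar v_j$, whose derivatives of every order are nonzero at $0$ and whose parameters $t=(t_1,\dots,t_d)$ range over an open set ${\mathcal{P}}$, and (b) absorbing the eigenvalue balance into the \emph{last nonlocal} factor by dilating its eigenfunction (the adjustable quantity is $\lambda_N$, not the local parameters), so that the final relation is a polynomial identity in genuinely free parameters and the identity principle applies.

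A secondary gap: Lemma~\ref{COR:GREEN2} is only a distributional statement, and interior regularity of $\phi_{\star,j}$ does not upgrade it to the pointwise asymptotics with the precise constant $T_{I_j}$ at points at distance $\sim\eps$ from the boundary that your computation uses; interior estimates combined with $\phi_\star\lesssim d^s$ only give one-sided bounds of the type $|\partial^I\phi_\star|\lesssim d^{s-|I|}$, not the limit. To make this step rigorous you would need higher-order boundary regularity of $\phi_\star/d^s$ (not contained in the cited results), or else you should keep $X_{0,j}$ as the distributional variable, integrate the exact identity $L(w_\eps)=0$ against test functions in $X_{0,j}$, and pass to the limit there, which is how the paper's use of Lemma~\ref{COR:GREEN2} is to be read. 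On the other hand, your observation that with your extra $\eps$-dilation all multi-indices $I$ contribute at the common order $\eps^{s_1+\dots+s_N}$, and your separation of distinct $I$'s via the non-integer powers $(e_j\cdot X_{0,j})^{s_j-|I_j|}$ together with the spherical monomials $e_j^{I_j}$, is sound in principle; the unrecoverable defect is the handling of the local variables described above.
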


\begin{proof} First, we consider the case in which $d\ne0$
(hence, we are taking into account the case in which the ambient space
possesses both
``local'' and ``nonlocal variables''; the case $d=0$ will be then discussed
at the end of the proof).

Since $\Lambda$ is a linear operator, we have that
${\mathcal{V}}_K$ is a vector space, hence a linear subspace of $\R^{K'}$.
So, we argue by contradiction: if ${\mathcal{V}}_K$ does not exhaust the whole
of $\R^{K'}$, then it must lie in a proper subspace. Accordingly, there exists
\begin{equation}\label{uojSUIIaaa}
\vartheta\in \partial B_1^{{K'}}\end{equation}
such that
\begin{equation}\label{uojSUIIaaa:2}
{\mathcal{V}}_K \subseteq \big\{
\zeta\in \R^{K'} {\mbox{ s.t. }} \vartheta\cdot\zeta=0
\big\}.\end{equation}
Now, for any $j\in\{1,\dots,N\}$ we denote by $\tilde\phi_{\star,j}\in C(\R^{n_j})$
the first eigenfunction of $(-\Delta)^{s_j}$ in $B^{n_j}_1$ with Dirichlet 
datum outside $B^{n_j}_1$ (and normalized to have unit norm in~$L^2(\R^{n_j})$).
The corresponding eigenvalue will be denoted by $\lambda_{\star,j}>0$.

We also fix a set of free parameters~$t_1,\dots,t_{\dpic}\in\R$.
We recall
that~$a_j$ and~$A_j$ are the coefficients
of the local and nonlocal parts of the operator, respectively,
as introduced in~\eqref{PAG81}
and~\eqref{PAG82}.
Up to reordering the variables and possibly taking the operators
to the other side of the equation, we suppose that~$A_N>0$
and
we set~$\lambda_j:=\lambda_{\star,j}$ for any~$j\in\{1,\dots,N-1\}$
and
$$ \lambda_N := 
\frac{1}{A_N}\,\left(
\sum_{j=1}^{\dpic} |a_j| t_j^{m_j}-\sum_{j=1}^{N-1} A_j\lambda_j
\right).$$
We also consider the set
$$ {\mathcal{P}}:=
\left\{ t=(t_1,\dots,t_{\dpic})\in\R^{\dpic} {\mbox{ s.t. }}
\sum_{j=1}^{\dpic} |a_j| t_j^{m_j}-\sum_{j=1}^{N-1} A_j\lambda_j>0
\right\}.$$
Notice that ${\mathcal{P}}$
is open and nonvoid (since it contains any point $t$ with large
coordinates $t_1,\dots,t_{\dpic}$). We also remark that for any~$t\in{\mathcal{P}}$
we have~$\lambda_N>0$.

Moreover, by construction
\begin{equation}\label{CONS:1}
\sum_{j=1}^{\dpic} |a_j| t_j^{m_j}-\sum_{j=1}^{N} A_j\lambda_j=0.
\end{equation}
We also set 
$$r_j:=\frac{{\lambda_{\star,j}^{1/2s_j}}}{{{\lambda_j^{1/2s_j}}}}.$$
We notice that, since~$\lambda_{\star,j}=\lambda_{j}$ unless~$j=N$,
we have that~$r_j=1$ for all~$j\in\{1,\dots,N-1\}$.

Also, for any~$j\in\{1,\dots,N\}$, the function 
\begin{equation} \label{UJojlsijhkcIUYT}
\phi_j(X_j):= \tilde\phi_{\star,j}\left( \frac{X_j}{r_j}\right)=
\tilde\phi_{\star,j}\left( \frac{{{{\lambda_j^{1/2s_j}}}}\,X_j}{{{
\lambda_{\star,j}^{1/2s_j}}}}\right)
\end{equation}
is an eigenfunction of $(-\Delta)^{s_j}$ in $B^{n_j}_{r_j}$,
with Dirichlet datum outside $B^{n_j}_{r_j}$ and eigenfunction
equal to $\lambda_j$, that is
\begin{equation}\label{CONS:2}
(-\Delta)^{s_j}\phi_j =\lambda_j\phi_j {\mbox{ in }} B^{n_j}_{r_j}.\end{equation}
Now, we define, for any~$j\in\{1,\dots,{\dpic}\}$,
$$ \bar a_j:=
\left\{ \begin{matrix}
a_j/|a_j| & {\mbox{ if }} a_j\ne0,\\
1 & {\mbox{ if }} a_j=0.
\end{matrix}
\right.$$
We
stress that
\begin{equation}\label{a j not 0}
\bar a_j\neq0.
\end{equation}
Now we
consider, for any~$j\in\{1,\dots,{\dpic}\}$,
the solution of the Cauchy problem
\begin{equation}
\label{varpi10} \left\{ \begin{matrix}
\partial^{m_j}_{x_j} \bar v_j = -\bar a_j \bar v_j,\\
\partial^{i}_{x_j} \bar v_j(0)=1 {\mbox{ for every }} i\in\{0,\dots,m_j-1
\}.\end{matrix}
\right.\end{equation}
Notice that the solution $\bar v_j$ is well defined at least in an interval
of the form $[-\rho_j,\rho_j]$ for a suitable $\rho_j>0$, and we define $$\rho:=\min_{j\in
\{1,\dots,{\dpic}\} }\rho_j.$$
We take
$\bar\tau\in C^\infty_0(B_\rho^{\dpic})$, with $\bar\tau=1$
in $B_{\rho/2}^{\dpic}$, and we set $\tau(x)=\tau(x_1,\dots,x_{\dpic}):=
\bar\tau(t_1x_1,\dots,t_{\dpic}x_{\dpic})$.
Moreover, we introduce the function
$$ v_j(x_j):=
\bar v_j (t_j x_j)  .$$
Notice that
\begin{equation}\label{CONS:3}
a_j \partial^{m_j}_{x_j} v_j = -|a_j| t_j^{m_j} v_j.
\end{equation}
Now, 
we take $e_1,\dots,e_N$, with
\begin{equation}\label{COSTRIZ}
e_j\in\partial B_{r_j}^{n_j}=\partial B_{{{\lambda_{\star,j}^{1/2s_j}}}/{{{\lambda_j^{1/2s_j}}}}}^{n_j}.\end{equation}
We introduce an additional set of free parameters~${Y}_1,\dots,{Y}_N$,
with~${Y}_j\in \R^{n_j}$ and~$e_j\cdot{Y}_j<0$.
We also take~$\eps>0$ (to be taken as small as we wish in the sequel,
possibly in dependence of~$e_1,\dots,e_N$ and~${Y}_1,\dots,{Y}_N$),
and we define
\begin{eqnarray*}
w(x,X)&:=& \tau(x)\,v_1(x_1)
\,\dots\,v_{\dpic}(x_{\dpic})
\,\phi_1(X_1+e_1+\eps{Y}_1)\,\dots\,\phi_N(X_N+e_N+\eps{Y}_N)\\
&=& \tau(x)\,v(x)\,\phi(X),\\
{\mbox{where }}\quad
v(x)&:=&v_1(x_1)
\,\dots\,v_{\dpic}(x_{\dpic}) \\
{\mbox{and }}\quad
\phi(X)&:=&\phi_1(X_1+e_1+\eps{Y}_1)
\,\dots\,\phi_N(X_N+e_N+\eps{Y}_N)
.\end{eqnarray*}
Notice that $w$ is compactly supported in $\R^\nu$. Moreover,
in light of~\eqref{CONS:2} and~\eqref{CONS:3},
if $(x,X)$ is sufficiently close to the origin, we have that
\begin{eqnarray*}
&& \ell w(x,X)= \sum_{j=1}^{\dpic} a_j \partial^{m_j}_{x_j}
\big(\tau(x)\, v(x)\,\phi(X)\big)=
-\sum_{j=1}^{\dpic} |a_j| t_j^{m_j}\, \tau(x)\,v(x)\,\phi(X)\\
{\mbox{and }}
&& {\mathcal{L}} w(x,X)= \sum_{j=1}^{N} A_j (-\Delta)^{s_j}_{X_j}
\Big( \tau(x)\,v(x)\,\phi(X)\Big)=
\sum_{j=1}^{N} A_j \lambda_j
\, \tau(x)\,v(x)\,\phi(X).
\end{eqnarray*}
Hence, by~\eqref{CONS:1},
$$ \Lambda w (x,X)=\ell w(x,X)+{\mathcal{L}} w(x,X)=0$$
if~$(x,X)$ is sufficiently close to the origin.
Consequently, $w\in {\mathcal{H}}$.
Thus, in view of~\eqref{44bis}
and~\eqref{uojSUIIaaa:2}, we have that
\begin{equation}\label{9i5162783940dhvcixzh} 
\begin{split}&
0=\vartheta\cdot \partial^K w(0)=\sum_{|\iota|\le K} \vartheta_\iota \partial^\iota w(0)
\\&\qquad\qquad=\sum_{|i_1|+\dots+|i_{\dpic}|+|I_1|+\dots+|I_N|\le K}
\vartheta_{i_1,\dots,i_{\dpic},I_1,\dots,I_N}
\partial^{i_1}_{x_1}\dots\partial^{i_{\dpic}}_{x_{\dpic}}
\partial^{I_1}_{X_1}\dots \partial^{I_N}_{X_N} w(0).
\end{split}\end{equation}
We claim that, for any $i\in\N$,
\begin{equation}\label{varpi11}
\varpi_{ij}:=\partial^{i}_{x_j} \bar v_j(0)\ne0.
\end{equation}
The proof of this can be done by induction.
Indeed, if $i\in\{0,\dots, m_j-1\}$, then \eqref{varpi11} is true, thanks to the initial condition
in \eqref{varpi10}. Suppose now that 
\begin{equation}\label{bas}
{\mbox{the claim in \eqref{varpi11} holds true for all $i\in\{0,\dots, i_o\}$,
for some $i_o\ge m_j-1$.}}\end{equation}
Then, using the equation in \eqref{varpi10} we have that
\begin{equation}\label{bas2}
\partial^{i_o+1}_{x_j} \bar v_j = \partial^{i_o+1-m_j}_{x_j}
\partial^{m_j}_{x_j} \bar v_j = -\bar a_j \partial^{i_o+1-m_j}_{x_j} \bar v_j.\end{equation}
By \eqref{bas}, we know that $ \partial^{i_o+1-m_j}_{x_j} \bar v_j(0)\ne0$.
This, \eqref{a j not 0} and \eqref{bas2} imply that $\partial^{i_o+1}_{x_j} \bar v_j(0)\ne0$.
This proves \eqref{varpi11}.

Now, from \eqref{varpi11} we have that
$$ \partial_{x_j}^{i_j} v(0) =t_j^{i_j} \varpi_{i_j\, j} \ne0.$$
Hence, we write~\eqref{9i5162783940dhvcixzh} as
\begin{equation}\label{98123gds123sdf} 
\begin{split} 0\;&=
\sum_{|i_1|+\dots+|i_{\dpic}|+|I_1|+\dots+|I_N|\le K}
\vartheta_{i_1,\dots,i_{\dpic},I_1,\dots,I_N}\,
\varpi_{i_1\, 1}\dots\varpi_{i_\dpic\, \dpic}
\,
t_1^{i_1}\dots
t_\dpic^{i_\dpic}\,
\partial^{I_1}_{X_1}\dots \partial^{I_N}_{X_N} \phi(0)\\ &=
\sum_{|i|+|I|\le K}
\vartheta_{i,I}\, \varpi_{i}
t^{i} \partial^{I}_{X} \phi(0)
,
\end{split}\end{equation}
where a multi-index notation has been adopted,
and if $i=(i_1,\dots,i_\dpic)$, $\varpi_{i}:= \varpi_{i_1\, 1}\dots\varpi_{i_\dpic\, \dpic}$.
We stress that
\begin{equation}\label{1267}
\varpi_{i}\ne0,\end{equation} thanks to \eqref{varpi11}.

Recalling~\eqref{UJojlsijhkcIUYT}, we write \eqref{98123gds123sdf} as
\begin{equation} \label{8dgsuigfrytfesdfg}
0=
\sum_{|i|+|I|\le K} 
\vartheta_{i,I}\, \varpi_{i}\,t^{i}
\prod_{j=1}^N 
\left( \frac{\lambda_j}{\lambda_{\star,j}}\right)^{\frac{|I_j|}{2s_j}}
\, \partial^{I_j}_{X_j} \tilde\phi_{\star,j}\left(
\frac{\lambda_j^{1/2s_j}}{\lambda_{\star,j}^{1/2s_j}} \,
(e_j+\eps{Y}_j)\right).\end{equation}
We remark that, in light of~\eqref{uojSUIIaaa},
\begin{equation}\label{NOTALL}{\mbox{$\vartheta_{i,I}$
are not all equal to zero.
}}\end{equation}
Now, by Lemma \ref{COR:GREEN2}, applied to~$s:=s_j$, $\alpha:=I_j$,
$e:=
\frac{\lambda_j^{1/2s_j}}{\lambda_{\star,j}^{1/2s_j}}e_j$ and~$X:=
\frac{\lambda_j^{1/2s_j}}{\lambda_{\star,j}^{1/2s_j}}Y_j$,
we see that, for any~$j\in\{1,\ldots,N\}$,
\begin{equation}\begin{split}\label{speriamo}
& \lim_{\eps\searrow0} \eps^{|I_j|-s_j}  
\partial^{I_j}_{X_j}\phi_{\star,j}\left(\frac{\lambda_j^{1/2s_j} e_j}{
\lambda_{\star,j}^{1/2s_j}}+\eps \frac{\lambda_j^{1/2s_j}Y_j
}{\lambda_{\star,j}^{1/2s_j}}\right)\\=\,& 
(-1)^{|I_j|}\,\kappa_\star\,\lambda_\star \,s_j\,(s_j-1)
\dots(s_j-|I_j|+1)\,
\left( \frac{\lambda_j}{\lambda_{\star,j}}\right)^{\frac{|I_j|}{2s_j}}
e_j^{I_{j}}\,\left(-
\frac{\lambda_j^{1/2s_j}e_j}{\lambda_{\star,j}^{1/2s_j}}
\cdot \frac{\lambda_j^{1/2s_j}Y_j}{\lambda_{\star,j}^{1/2s_j}}\right)_+^{s_j-|I_j|}\,
.\end{split}\end{equation}
Now we consider the multi-index~$\bar I$, with the property
that~$|\bar I|$
is the biggest such that~$|i|+|I|\le K$, and, for such~$\bar I$, we define
$$ \Xi :=\sum_{j=1}^N \left(|\bar{I}_{j}|-s_j\right)
=|\bar I|-\sum_{j=1}^N s_j.$$
Notice that~$\bar I$ is well-defined, thanks to~\eqref{uojSUIIaaa}
and~\eqref{1267}.
As a consequence of~\eqref{speriamo}, 
\begin{equation}\begin{split}\label{forse}&
\lim_{\eps\searrow0} \eps^{\Xi} 
\prod_{j=1}^N 
\left( \frac{\lambda_j}{\lambda_{\star,j}}\right)^{\frac{|\bar{I}_j|}{2s_j}}
\, \partial^{\bar{I}_j}_{X_j} \tilde\phi_{\star,j}\left(
\frac{\lambda_j^{1/2s_j}}{\lambda_{\star,j}^{1/2s_j}} \,
(e_j+\eps{Y}_j)\right)
\\=\,&
\lim_{\eps\searrow0} 
\prod_{j=1}^N \eps^{|\bar{I}_{{j}}|-s_j} 
\left( \frac{\lambda_j}{\lambda_{\star,j}}\right)^{\frac{|\bar{I}_j|}{2s_j}}
\, \partial^{\bar{I}_j}_{X_j} \tilde\phi_{\star,j}\left(
\frac{\lambda_j^{1/2s_j}}{\lambda_{\star,j}^{1/2s_j}} \,
(e_j+\eps{Y}_j)\right)
\\
=\,&
\prod_{j=1}^N \left( \frac{\lambda_j}{\lambda_{\star,j}}\right)^{
\frac{|\bar{I}_j|}{s_j}}
(-1)^{|\bar{I}_j|}\,\kappa_\star\,\lambda_\star \,s_j\,(s_j-1)
\dots(s_j-|\bar{I}_j|+1)\,
e_j^{\bar{I}_{j}}\,\left(-
\frac{\lambda_j^{1/2s_j}e_j}{\lambda_{\star,j}^{1/2s_j}}
\cdot \frac{\lambda_j^{1/2s_j}
Y_j}{\lambda_{\star,j}^{1/2s_j}}\right)_+^{s_j-|\bar{I}_j|}\,
.\end{split}\end{equation}
On the other hand, using again~\eqref{speriamo}, we conclude that,
for any multi-index~$I$, with~$|I|<|\bar I|$, 
\begin{eqnarray*}&&
\lim_{\eps\searrow0} \eps^{\Xi} 
\prod_{j=1}^N 
\left( \frac{\lambda_j}{\lambda_{\star,j}}\right)^{\frac{|{I}_j|}{2s_j}}
\, \partial^{{I}_j}_{X_j} \tilde\phi_{\star,j}\left(
\frac{\lambda_j^{1/2s_j}}{\lambda_{\star,j}^{1/2s_j}} \,
(e_j+\eps{Y}_j)\right)\\
&=& \lim_{\eps\searrow0} \eps^{|\bar I|-|I|} 
\prod_{j=1}^N \eps^{|{I}_{{j}}|-s_j} 
\left( \frac{\lambda_j}{\lambda_{\star,j}}\right)^{\frac{|{I}_j|}{2s_j}}
\, \partial^{{I}_j}_{X_j} \tilde\phi_{\star,j}\left(
\frac{\lambda_j^{1/2s_j}}{\lambda_{\star,j}^{1/2s_j}} \,
(e_j+\eps{Y}_j)\right)
\\&=&0.
\end{eqnarray*}
In consequence of this and~\eqref{forse}, 
after multiplying~\eqref{8dgsuigfrytfesdfg}
by~$\eps^\Xi\in(0,+\infty)$ and sending $\eps\searrow0$,
we obtain
\begin{eqnarray*} 0&=&
\sum_{|i|\le K-|\bar I|}
\vartheta_{i,\bar I} \,\varpi_{i}\,t^i\,
\\ &&\cdot\prod_{j=1}^N \left[
\left( \frac{{{{\lambda_j}}}\, }{{{
\lambda_{\star,j}}}}\right)^{\frac{|\bar{I}_j|}{s_j}}\,
(-1)^{|\bar{I}_j|}\,\kappa_\star\,\lambda_\star \,s_j\,(s_j-1)
\dots(s_j-|\bar{I}_j|+1)\, e_j^{\bar{I}_j}\,
\left(- \frac{{{{\lambda_j^{\frac1{2s_j}}}}}\,
e_j}{{{
\lambda_{\star,j}^{\frac1{2s_j}}}}}\cdot
\frac{{{{\lambda_j^{\frac1{2s_j}}}}}\,
{Y}_j}{{{
\lambda_{\star,j}^{\frac1{2s_j}}}}}\right)_+^{s_j-|\bar{I}_j|}\right]\\
&&=
\sum_{|i|\le K-|\bar I|}
\vartheta_{i,\bar I} \,\varpi_{i}\,t^i\,
\prod_{j=1}^N \left[
\left( \frac{{{{\lambda_j}}}\, }{{{
\lambda_{\star,j}}}}\right)\,
(-1)^{|\bar{I}_j|}\,\kappa_\star\,\lambda_\star \,s_j\,(s_j-1)
\dots(s_j-|\bar{I}_j|+1)\, e_j^{\bar{I}_j}\,
\left(- e_j\cdot
{Y}_j\right)_+^{s_j-|\bar{I}_j|}\right]
.
\end{eqnarray*}
That is, collecting and simplifying some terms, we find that
\begin{equation}\label{FOR:SP78} 0=
\sum_{|i|\le K-|\bar I|}
\tilde\vartheta_{i,\bar I}\,\varpi_{i}\,t^i\,e^{\bar I}\,\prod_{j=1}^N
(-e_j\cdot {Y}_j)_+^{-|\bar{I}_j|}
,\end{equation}
with
\begin{equation}\label{uffi}
\tilde\vartheta_{i,\bar{I}}:=
\vartheta_{i,\bar I} \,
\prod_{j=1}^N  \big( s_j\,(s_j-1)
\dots(s_j-|\bar{I}_j|+1) \big).\end{equation}
Notice that formula~\eqref{FOR:SP78} is true
for any~$(t_1,\dots,t_{\dpic})\in{\mathcal{P}}$, 
any~$e_1,\dots,e_N$ satisfying~\eqref{COSTRIZ},
and any~${Y}_1,\dots,{Y}_N$,
with~${Y}_j\in \R^{n_j}$ and~$e_j\cdot{Y}_j<0$.

For this, we take new free parameters~$T_1,\dots,T_N$ with~$T_j\in\R^{n_j}$ and we choose
$$ 
e_j:=
\frac{{\lambda_{\star,j}^{1/2s_j}}}{{{\lambda_j^{1/2s_j}}}}\cdot
\frac{T_j}{|T_j|}\qquad{\mbox{ and }}\qquad
{Y}_j:= -\frac{T_j}{|T_j|^2}.$$
Then, formula~\eqref{FOR:SP78} becomes
$$ \sum_{|i|\le K-|\bar I|}
\tilde\vartheta_{i,\bar I}\,\varpi_i\,t^i\,T^{\bar I}=0.$$
By the Identity Principle of Polynomials, this gives that
each~$\tilde\vartheta_{i,\bar I} \varpi_i$ is equal to zero. 
Hence, by \eqref{1267}, each~$\tilde\vartheta_{i,\bar I}$
is equal to zero. Therefore, recalling~\eqref{uffi}, we conclude that
each~$\vartheta_{i,\bar I}$
is equal to zero.

Plugging this information into~\eqref{8dgsuigfrytfesdfg}, we obtain that
$$
0=
\sum_{{ {|i|+|I|\le K,}\atop{|I|<|\bar I|} }} 
\vartheta_{i,I}\, \varpi_{i}\,t^{i}
\prod_{j=1}^N 
\left( \frac{\lambda_j}{\lambda_{\star,j}}\right)^{\frac{|I_j|}{2s_j}}
\, \partial^{I_j}_{X_j} \tilde\phi_{\star,j}\left(
\frac{\lambda_j^{1/2s_j}}{\lambda_{\star,j}^{1/2s_j}} \,
(e_j+\eps{Y}_j)\right).$$
Now we choose a multi-index~$\tilde I$, with the property
that~$|\tilde I|$
is the biggest such that~$|i|+|I|\le K$ and~$|\tilde I|<|\bar I|$, 
and, for such~$\tilde I$, we define
$$ \tilde\Xi :=\sum_{j=1}^N \left(|\tilde{I}_{j}|-s_j\right)
=|\tilde I|-\sum_{j=1}^N s_j.$$
Notice again that~$\tilde I$ is well-defined, in virtue
of~\eqref{uojSUIIaaa} and~\eqref{1267}.

Thus, we repeat the same argument as above,
with~$\tilde I$ and~$\tilde\Xi$
in place of~$\bar I$ and~$\Xi$, respectively, and we conclude that
each~$\vartheta_{i,\tilde I}$ is equal to zero.

Iterating this procedure, we obtain that each~$\vartheta_{i,I}$
is equal to zero.
This is in contradiction
with~\eqref{NOTALL}
and so the desired result is established (when $d\ne0$).\medskip

Now we consider the case in which $d=0$,
i.e. when only ``nonlocal variables'' are present.
For this, we argue recursively on $N$ (i.e. on the number of
the ``nonlocal variables'').
When~$N=1$, that is when there is only one set
of ``nonlocal variables'', the result is true, thanks to
Theorem 3.1 in \cite{all-fcts-are-sh}.

Now we suppose that the result is true for~$N-1$
and we prove it for~$N$.
We set
$$ {\mathcal{L}}' := \sum_{j=1}^{N-1} A_j\,(-\Delta_{X_j})^{s_j}
\quad{\mbox{ and }}\quad {\mathcal{L}}_N := A_N\,(-\Delta_{X_N})^{s_N}.$$
We denote by ${\mathcal{H}}'$ the family of all functions
$w'\in C(\R^{n_1+\dots+n_{N-1}})$ that are compactly supported
and for which there exists a neighborhood of the origin
on which $w'$ is smooth and~${\mathcal{L}}'w'=0$.

Similarly, we call~${\mathcal{H}}_N$ the family of all functions
$w_N\in C(\R^{n_N})$ that are compactly supported
and for which there exists a neighborhood of the origin 
on which $w_N$ is smooth and~${\mathcal{L}}_N w_N=0$.

We also use the notation~$X=(X',X_N)\in \R^{n_1+\dots+n_{N-1}}
\times \R^{n_N}$ to distinguish the last set of variables.
Given any~$w'\in {\mathcal{H}}'$
and any~$w_N\in{\mathcal{H}}_N$, we set
$$ W_{w',w_N} (X)=W_{w',w_N} (X',X_N):=
w'(X')\,w_N(X_N).$$
Notice that
$$ {\mathcal{L}} W_{w',w_N} (X) = 
\Big( {\mathcal{L}}'w'(X')\Big)\,w_N(X_N) +
w'(X')\,\Big({\mathcal{L}}_N w_N(X_N)\Big).$$
Thus,
\begin{equation}\label{7yu8rwe9yfuefgffbfbfbfas}
{\mbox{if $w'\in {\mathcal{H}}'$ and $w_N\in{\mathcal{H}}_N$, then
$W_{w',w_N} \in {\mathcal{H}}$.}}
\end{equation}
Again, we argue by contradiction and we suppose that
the claim in Lemma~\ref{GERM} is not true, hence
there exists a unit vector~$\vartheta$
such that 
\begin{equation}\label{7yu8rwe9yfuefgffbfbfbfas2}
{\mbox{${\mathcal{V}}_K$ lies in the orthogonal space of~$\vartheta$.
}}\end{equation}
Notice that each component of~$\vartheta$ can be written as~$\vartheta_I$,
with~$I=(I_1,\dots,I_N)$ and~$|I|\le K$. To distinguish
the last component we write~$I':=(I_1,\dots,I_{N-1})$
and so~$\vartheta_I=\vartheta_{(I',I_N)}$ with~$|I'|+|I_N|\le K$.

In particular, by~\eqref{7yu8rwe9yfuefgffbfbfbfas}
and~\eqref{7yu8rwe9yfuefgffbfbfbfas2}, for any
$w'\in {\mathcal{H}}'$ and any~$w_N\in{\mathcal{H}}_N$
we have that
\begin{eqnarray*}
&& 0=\sum_{|I|\le K} \vartheta_I \partial^I_X W_{w',w_N}(0)=
\sum_{|I'|+|I_N|\le K} \vartheta_{(I',I_N)} \partial^{I'}_{X'} w'(0)
\partial^{I_N}_{X_N} w_N(0)
\\ &&\qquad\qquad= \sum_{|I_N|\le K} \left[
\sum_{|I'|\le K-|I_N|} \vartheta_{(I',I_N)} \partial^{I'}_{X'} w'(0)
\right]\, \partial^{I_N}_{X_N} w_N(0)
= \sum_{|I_N|\le K} \hat\vartheta_{I_N,w'}
\, \partial^{I_N}_{X_N} w_N(0),
\end{eqnarray*}
where
$$ \hat\vartheta_{I_N,w'}:=
\sum_{|I'|\le K-|I_N|} \vartheta_{(I',I_N)} \partial^{I'}_{X'} w'(0).$$
That is, all functions in~$
{\mathcal{H}}_N$ lie in the orthogonal of the vector with
entries~$\hat\vartheta_{I_N,w'}$.
{F}rom Theorem 3.1 in \cite{all-fcts-are-sh}
this implies that each~$\hat\vartheta_{I_N,w'}$ must vanish, that is
$$ \sum_{|I'|\le K-|I_N|} \vartheta_{(I',I_N)} \partial^{I'}_{X'} w'(0)=0$$
for any multi-index~$I_N$ with~$|I_N|\le K$
and any~$w'\in {\mathcal{H}}'$.
Since~${\mathcal{H}}'$ contains $N-1$ ``nonlocal variables'', we can now
use the inductive hypothesis and conclude that each~$\vartheta_{(I',I_N)}$
must vanish. This is a contradiction with the fact that~$\vartheta$
was supposed to be of unit length and so the proof of
Lemma~\ref{GERM} is complete.
\end{proof}

\section{Proof of Theorem \ref{MAIN} when $f$ is a monomial}\label{MAonaosec}

Now we prove Theorem \ref{MAIN} under the additional assumption that $f$ is of monomial type, namely
that
\begin{equation}\label{MAonao}
f(x,X)=
\frac{x_1^{i_1}\dots x_{\dpic}^{i_{\dpic}} X_1^{I_1}\dots X_N^{I_N}}{\iota!}
=\frac{x^i\,X^I}{\iota!},\end{equation}
for some $(i_1,\dots,i_{\dpic})\in\N^{\dpic}$ and $(I_1,\dots,I_N)\in \N^{n_1}\times\N^{n_N}$.
Of course, we used here the standard notation for powers of multi-indices: namely
if $X_1:=(X_{1,1},\dots,X_{1,n_1})\in\R^{n_1}$
and $I_1:=(I_{1,1},\dots,I_{1,n_1})\in\N^{n_1}$, the notation $X_1^{I_1}$
is short for $X_{1,1}^{ I_{1,1} }\dots X_{1,n_1}^{ I_{1,n_1} }$.
Also, $\iota$ is as in \eqref{UIAiota} and, as customary, we used the multi-index factorial
$$ \iota! := i_1!\,\dots\,i_{\dpic}!\,I_1!\,\dots\,I_N!,$$
where, once again $I_1!:=I_{1,1}!\,\dots I_{1,n_1}!$ and so on.

Then, to prove Theorem \ref{MAIN} in this case, we argue as follows. 
We define
\begin{eqnarray}&&\label{dega} 
\gamma:=\sum_{j=1}^{\dpic} \frac{i_j}{m_j}+\sum_{j=1}^N
\frac{|I_j|}{2s_j}\\
{\mbox{and }}&&\label{demu}
\mu:=\min\left\{ \frac{1}{m_1},\,\dots,\,\frac{1}{m_{\dpic}},\,
\frac{1}{2s_1},\,\dots,\,
\frac{1}{2s_N}
\right\}.
\end{eqnarray}
We also take~$K_o\in\N$ with
\begin{equation}\label{89uojk3456ygcxcvaag}
K_o\ge\frac{\gamma+1}{\mu}\end{equation}
and we
let
\begin{equation}\label{deka}
K:=K_o+|i|+|I|+k,\end{equation} where~$k$ is the fixed
integer given by the statement of Theorem \ref{MAIN}.
By Lemma \ref{GERM},
there exist
a neighborhood ${\mathcal{N}}$ 
of the origin in $\R^\nu$ and a function
$w\in C(\R^\nu)$, compactly supported in $\R^\nu$,
such that $w\in C^{\infty}({\mathcal{N}})$,
$\Lambda w=0$ in ${\mathcal{N}}$, and
such that all the derivatives of~$w$ in~$0$ of order up to~$K$
vanish, with the exception of~$\partial^\iota w(0)$, which is equal to~$1$.
In this way, setting
\begin{equation}\label{gdef:01}
g:=w-f, 
\end{equation}
we have that
\begin{equation*}
{\mbox{$\partial^\alpha g(0)=0$ for any~$\alpha\in\N^\nu$ with~$|\alpha|\le K$}}.
\end{equation*}
Accordingly, in ${\mathcal{N}}$ we can write
\begin{equation}\label{gdef:02}
g(x,X)=\sum_{|\tau|\ge K+1} x^t\, X^T\,h_\tau(x,X),\end{equation}
for functions~$h_\tau$ that are smooth in~${\mathcal{N}}$,
where the multi-index notation~$\tau=(t,T)$ has been used.

Now, we fix~$\eta\in(0,1)$ (to be taken suitably small with respect
to the fixed~$\eps>0$ given by the statement of
Theorem \ref{MAIN}). We define
$$u(x,X):=\frac{1}{ \eta^{\gamma}}\, w\big( \eta^{\frac1{m_1}}x_1,\dots,
\eta^{\frac1{m_{\dpic}}}x_{\dpic}, 
\eta^{\frac1{2s_1}}X_1,\dots,\eta^{\frac1{2s_N}}X_N
\big).$$
Notice that~$u$ is compactly supported in~$\R^\nu$
and smooth in a neighborhood of the origin (which is large for~$\eta$
small, hence we may suppose that it includes~$B_1$), and in
this neighborhood we have
\begin{eqnarray*} &&
\eta^\gamma\,\Lambda u(x,X)=\eta\left[
\sum_{j=1}^{\dpic} a_j \partial^{m_j}_{x_j} w\big( \eta^{\frac1{m_1}}x_1,\dots,
\eta^{\frac1{m_{\dpic}}}x_{\dpic}, 
\eta^{\frac1{2s_1}}X_1,\dots,\eta^{\frac1{2s_N}}X_N
\big)\right.\\ &&\qquad\qquad\qquad\left.+
\sum_{j=1}^N A_j\,(-\Delta_{X_j})^{s_j}w\big( \eta^{\frac1{m_1}}x_1,\dots,
\eta^{\frac1{m_{\dpic}}}x_{\dpic}, 
\eta^{\frac1{2s_1}}X_1,\dots,\eta^{\frac1{2s_N}}X_N
\big)\right]
=0.
\end{eqnarray*}
These observations establish~\eqref{TH:M1} and~\eqref{TH:M3}.
Now we prove~\eqref{TH:M2}. To this aim, we observe that
the monomial structure of~$f$ in~\eqref{MAonao} 
and the definition of~$\gamma$ in~\eqref{dega}
imply that
$$
\frac{1}{ \eta^{\gamma}}\, f\big( \eta^{\frac1{m_1}}x_1,\dots,
\eta^{\frac1{m_{\dpic}}}x_{\dpic}, 
\eta^{\frac1{2s_1}}X_1,\dots,\eta^{\frac1{2s_N}}X_N
\big)=f(x,X).$$
Consequently, by~\eqref{gdef:01} and~\eqref{gdef:02},
\begin{eqnarray*}
u(x,X)-f(x,X)&=&
\frac{1}{ \eta^{\gamma}}\, g\big( \eta^{\frac1{m_1}}x_1,\dots,
\eta^{\frac1{m_{\dpic}}}x_{\dpic}, 
\eta^{\frac1{2s_1}}X_1,\dots,\eta^{\frac1{2s_N}}X_N
\big)\\
&=& 
\sum_{|\tau|\ge K+1} 
\eta^{ \left|\frac{t}{m}\right| +\left|\frac{T}{2s}\right|-\gamma }\,
x^t\, X^T\,h_\tau
\big( \eta^{\frac1{m}}x, \eta^{\frac1{2s}}X\big)
\end{eqnarray*}
where the multi-index notation has been used.

Therefore, for any multi-index~$\beta=(b,B)$ with~$|\beta|\le k$,
\begin{equation}\begin{split}\label{8ihjr6yfhytdgdsg}&
\partial^\beta
\left( u(x,X)-f(x,X)\right) =\partial^b_x\,\partial^B_X\,
\left( u(x,X)-f(x,X)\right)\\
&\qquad=\sum_{ {{|b'|+|b''|=|b|}\atop{|B'|+|B''|=|B|}}\atop{|\tau|\ge K+1}}
c_{\tau,\beta}\,
\eta^{ \left|\frac{t}{m}\right| +\left|\frac{T}{2s}\right|-\gamma+
\left|\frac{b''}{m}\right|+\left|\frac{B''}{2s}\right| }\,\,
x^{t-b'}\, X^{T-B'}\,\partial^{b''}_x\,\partial^{B''}_X\,h_\tau
\big( \eta^{\frac1{m}}x, \eta^{\frac1{2s}}X\big),
\end{split}\end{equation}
for suitable coefficients~$c_{\tau,\beta}$.
Thus, to prove~\eqref{TH:M2}, we need to show that
this quantity is small if so is~$\eta$. To this aim,
we use~\eqref{demu}, \eqref{89uojk3456ygcxcvaag} and~\eqref{deka} to see that
\begin{eqnarray*}
&&\left|\frac{t}{m}\right| +\left|\frac{T}{2s}\right|-\gamma+
\left|\frac{b''}{m}\right|+\left|\frac{B''}{2s}\right|\ge 
\left|\frac{t}{m}\right| +\left|\frac{T}{2s}\right|-\gamma\\
&&\qquad\qquad\ge \mu \big( |t|+|T|\big)-\gamma\ge
K\mu-\gamma\ge K_o\mu-\gamma\ge1.\end{eqnarray*}
Consequently, we deduce from~\eqref{8ihjr6yfhytdgdsg} that~$
\| u-f\|_{C^k(B_1^\nu)}\le C\eta$, for some~$C>0$.
By choosing~$\eta$ sufficiently small with respect to~$\eps$,
this implies~\eqref{TH:M2}.
The proof of Theorem \ref{MAIN} when $f$ is a monomial
is thus complete.

\section{Proof of Theorem \ref{MAIN} when $f$ is a polynomial}\label{OJLA:S}

If $f$ is a polynomial, we can write $f$ as a finite sum of monomials, say
$$ f(x,X)=\sum_{j=1}^J c_j f_j(x,X),$$
where each $f_j$ is a monomial as in \eqref{MAonao}, $c_j\in\R$ and $J\in\N$.
Let $c:=\max_{j\in J}|c_j|$.
Then,
we know that Theorem \ref{MAIN}
holds for each $f_j $, in view of the proof given in Section \ref{MAonaosec},
and so
we find $u_j\in C^\infty(B_1^\nu)\cap C(\R^\nu)$ and $R_j>1$ such that
$\Lambda u_j=0 $ in $ B_1^\nu$,
$ \| u_j-f_j\|_{C^k(B_1^\nu)}\le \varepsilon$
and $ u=0 $ in $ \R^\nu\setminus B_{R_j}^\nu$. Hence, we set
$$ u(x,X)=\sum_{j=1}^J c_j u_j(x,X),$$
and we see that
$$ \| u-f\|_{C^k(B_1^\nu)}\le
\sum_{j=1}^J |c_j|\,\| u_j-f_j\|_{C^k(B_1^\nu)}\le cJ\,\varepsilon.$$
Also, since $\Lambda$ is linear, we have that $\Lambda u=0 $ in $ B_1^\nu$.
Finally, $u$ is supported in $B_{R}^\nu$, being $R:=\max_{j\in J}R_j$.
This establishes Theorem \ref{MAIN} for polynomials
(up to replacing $\varepsilon$
with $cJ\,\varepsilon$).

\section{Completion of the proof of Theorem \ref{MAIN}}\label{puaoisd687456}

Let $f$ be as in the statement of Theorem \ref{MAIN}.
By a version of the Stone-Weierstra{\ss} Theorem
(see e.g. Lemma 2.1 in \cite{all-fcts-are-sh}), we know that there exists
a polynomial $\tilde f$ such that $
\| f-\tilde f\|_{C^k(B_1^\nu)}\le \varepsilon$. Then, we know that Theorem \ref{MAIN}
holds for $\tilde f $, in view of the proof given in Section \ref{OJLA:S}, and so
we find $u\in C^\infty(B_1^\nu)\cap C(\R^\nu)$ and $R>1$ such that 
$\Lambda u=0 $ in $ B_1^\nu$,
$ \| u-\tilde f\|_{C^k(B_1^\nu)}\le \varepsilon$
and $ u=0 $ in $ \R^\nu\setminus B_R^\nu$.
Then, we see that $\| u-f\|_{C^k(B_1^\nu)}\le 
\| u-\tilde f\|_{C^k(B_1^\nu)}+\| f-\tilde f\|_{C^k(B_1^\nu)}\le 2\varepsilon$,
hence Theorem \ref{MAIN} is proved (up to replacing $\varepsilon$ 
with $2\varepsilon$).

\begin{appendix}

\section*{Appendix A -- 
Boundary behavior of solutions of fractional Laplace equations
and proof of Lemma \ref{COR:GREEN2}}

In this section, we detect the exact boundary
behavior of solutions of fractional Laplace equations in a 
ball with Dirichlet data, in order to prove Lemma~\ref{COR:GREEN2}.
For estimates in general domains, see e.g. \cite{MR3168912}
and the references therein. Let us remark that, in our context,
we do not only obtain
bounds from above and below, but also a precise asymptotics in the limits
which approach the boundary.

In order to obtain our bounds,
we make use of the fractional Green function, whose setting goes as follows.
Given $s\in(0,1)$ and $x$, $z\in B_1$,
we consider the function
\begin{equation}\label{G def} G(x,z):=|z-x|^{2s-n}\int_0^{r_0(x,z)} \frac{t^{s-1}\,dt}{
(t+1)^{\frac{n}{2}}},\end{equation}
with\footnote{Though we will not use this,
it is interesting to point out that
when $n=2s$, i.e. when $n=1$ and $s=\frac12$, the function $G$
can be written explicitly, up to constants, as
$$ G(x,z)=\log \frac{1-xz+\sqrt{(1-|x|^2)\,(1-|z|^2)}}{|z-x|}.$$
This follows by computing the integral
$$ \int \frac{dt}{ \sqrt{ t\,(t+1)}}=
2\log (\sqrt{t}+\sqrt{t+1})+{\mbox{const}}\,.$$}
\begin{equation}\label{r0 def}
r_0(x,z):=\frac{(1-|x|^2)\,(1-|z|^2)}{|z-x|^2}.\end{equation}
Up to normalization factors,
the function $G$ plays the role of a Green function in the fractional setting,
as discussed for instance in~\cite{MR3461641} and in the references therein.

If $x$ lies in an $\eps$-neighborhood of~$\partial B_1$, then $G$ is
of order $\eps^s$, as stated precisely in the next result:

\begin{lemma}\label{GOALEM}
Let $e\in \partial B_1$, $\eps_o>0$ and $\omega\in \partial B_1$. Assume that $
e+\eps\omega\in B_1$ for all $\eps\in(0,\eps_o]$. Let $f\in C^\alpha(\R^{\dpic})$ for some $\alpha\in(0,1)$,
with $f=0$ outside $B_1$.

Then
\begin{equation} \label{GOA}
\lim_{\eps\searrow0}\eps^{-s}\int_{B_1} f(z)\,G(e+\eps\omega,z)\,dz
=
\int_{B_1} f(z)\,
\frac{(-2\,e\cdot\omega)^s\,(1-|z|^2)^s }{
s\,|z-e|^n } \,dz.\end{equation}
\end{lemma}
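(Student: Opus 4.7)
The plan is to extract the leading $\eps^s$ prefactor from $G(e+\eps\omega,z)$ via an explicit change of variables, identify the pointwise limit of the integrand, and then pass to the limit under the integral by splitting the domain between a fixed region away from $e$ (where bounded convergence applies) and a small neighborhood of $e$ (whose contribution is shown to vanish).

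Substituting $t=\sigma\,r_0(x,z)$ in \eqref{G def} recasts the Green function as
$$G(x,z)=\frac{(1-|x|^2)^s\,(1-|z|^2)^s}{|z-x|^n}\,J\bigl(r_0(x,z)\bigr),$$
where $J(r):=\int_0^1 \sigma^{s-1}(\sigma r+1)^{-n/2}\,d\sigma$ is nonincreasing with $J(0)=1/s$ and $J\leq 1/s$ throughout. Since $|e|=1$, one has $1-|e+\eps\omega|^2=-2\eps\,(e\cdot\omega)-\eps^2$, which by the hypothesis $e+\eps\omega\in B_1$ is positive and of order $\eps$. Thus $\eps^{-s}(1-|e+\eps\omega|^2)^s\to(-2\,e\cdot\omega)^s$, while for each fixed $z\in B_1\setminus\{e\}$ the ratio $r_0(e+\eps\omega,z)\to 0$, so $J(r_0)\to 1/s$, yielding the pointwise limit
$$\eps^{-s}\,G(e+\eps\omega,z)\;\longrightarrow\;\frac{(-2\,e\cdot\omega)^s\,(1-|z|^2)^s}{s\,|z-e|^n}.$$

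To justify exchange of limit and integral, I would fix a small $\delta>0$ and split $\int_{B_1}=\int_{B_1\setminus B_\delta(e)}+\int_{B_1\cap B_\delta(e)}$. On the exterior piece, for $\eps<\delta/2$ the rescaled integrand is uniformly bounded in both $z$ and $\eps$, so bounded convergence delivers the matching limit there. On the interior piece, the H\"older regularity of $f$ together with $f(e)=0$ (which holds because $f=0$ outside $B_1$ and $f$ is continuous) gives $|f(z)|\leq C|z-e|^\alpha$; combined with the boundary inequality $(1-|z|^2)^s\leq C|z-e|^s$ (from $1-|z|\leq|z-e|$ for $z\in B_1$, $e\in\partial B_1$) and appropriate bounds on $G$---namely the universal form above when $|z-(e+\eps\omega)|\geq\eps$, and $G(e+\eps\omega,z)\leq C|z-e-\eps\omega|^{2s-n}$, obtained by extending the integral in \eqref{G def} to $(0,+\infty)$, when $|z-(e+\eps\omega)|<\eps$---one shows that both the $\eps$-dependent integral and its candidate limit over this piece are $O(\delta^{\alpha+s})$, uniformly in small $\eps$. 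Sending $\eps\to 0$ first and then $\delta\to 0$ then completes the proof.

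The main obstacle is this last uniform-in-$\eps$ control near $z=e$: the moving pole of $G$ at $z=e+\eps\omega$ and the fixed boundary singularity of the limiting kernel at $z=e$ both concentrate there, and one must juggle the H\"older decay of $f$, the boundary vanishing of $(1-|z|^2)^s$, and the correct bound on $G$ in each subregion so that the contribution remains uniformly small.
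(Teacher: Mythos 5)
Your strategy is genuinely different from the paper's and, in structure, sound: you factor the kernel as $G(x,z)=\frac{(1-|x|^2)^s(1-|z|^2)^s}{|z-x|^n}\,J\big(r_0(x,z)\big)$ via the substitution $t=\sigma r_0$, read off the pointwise limit from $J(0)=1/s$ and $1-|e+\eps\omega|^2=-2\eps\,e\cdot\omega-\eps^2$, and then justify the interchange by splitting off $B_\delta(e)$, exploiting $|f(z)|\le C|z-e|^\alpha$ (which is legitimate, since $f$ is $C^\alpha$ and vanishes outside $B_1$, hence at $e$) and $(1-|z|^2)^s\le C|z-e|^s$. The paper instead expands $(1+t)^{-n/2}$ in a binomial series, isolates the $k=0$ term, and concludes with a uniform-integrability/Vitali argument; your outer-region bounded-convergence step and your estimate on $B_\delta(e)\cap\{|z-e-\eps\omega|\ge\eps\}$ (where $|z-e|\le 2|z-e-\eps\omega|$, $J\le 1/s$ and $1-|e+\eps\omega|^2\le 2\eps$ give the integrable majorant $C|z-e-\eps\omega|^{\alpha+s-n}$) are correct in every dimension and arguably cleaner.

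The gap is the bound you invoke at the moving pole: $G(e+\eps\omega,z)\le C|z-e-\eps\omega|^{2s-n}$ ``by extending the integral in \eqref{G def} to $(0,+\infty)$''. That extension converges only when $n>2s$. For $n=1$ and $s\ge\frac12$ --- cases the lemma must cover, since Theorem \ref{MAIN} allows $n_j=1$ with arbitrary $s_j\in(0,1)$ --- the integral $\int_0^{+\infty}t^{s-1}(1+t)^{-n/2}\,dt$ diverges, and the asserted bound is actually false near the diagonal: when $n<2s$ one has $G(x,z)\to \frac{[(1-|x|^2)(1-|z|^2)]^{s-\frac n2}}{s-\frac n2}$ as $z\to x$, which for $x=e+\eps\omega$ is of order $\eps^{2s-n}$, whereas $|z-x|^{2s-n}\to 0$; when $n=2s$ the kernel blows up logarithmically while your bound is a constant. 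So the contribution of $\{|z-e-\eps\omega|<\eps\}$ is not controlled as written in those cases. The repair is exactly the case distinction the paper carries out for its tail term $g$: on that set $1-|z|\le C\eps$, hence $G\le C\eps^{2s-n}$ if $n<2s$ and $G\le C\big(1+\log\frac{C\eps}{|z-e-\eps\omega|^2}\big)$ if $n=2s$, which combined with $|f|\le C\eps^\alpha$ gives a contribution $O\big(\eps^{\alpha+s}(1+|\log\eps|)\big)=o(1)$, closing the argument; with that amendment your proof is complete.
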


The rather technical proof of Lemma~\ref{GOALEM} is postponed
to Appendix~B, for the facility of the reader. Here,
we deduce from Lemma~\ref{GOALEM} the boundary estimates needed
to the proof of our main result:

\begin{proposition}\label{123rtUOJLS}
Let $e\in \partial B_1$, $\eps_o>0$ and $\omega\in \partial B_1$. Assume that $
e+\eps\omega\in B_1$ for all $\eps\in(0,\eps_o]$. Let $f\in C^\alpha(\R^{n})$ for some $\alpha\in(0,1)$,
with $f=0$ outside $B_1$.

Let $u$ be a weak solution of
$$ \left\{ \begin{matrix}
(-\Delta)^s u =f & {\mbox{ in }} B_1,\\
u=0 & {\mbox{ in }}\R^n\setminus B_1.
\end{matrix}\right. $$
Then
\begin{equation*} 
\lim_{\eps\searrow0}\eps^{-s}\,u(e+\eps\omega)
=
\kappa(n,s)\,(-2\,e\cdot\omega)^s\,\int_{B_1} f(z)\,
\frac{(1-|z|^2)^s }{
s\,|z-e|^n } \,dz,\end{equation*}
where
$$ \kappa(n,s):= \frac{\Gamma\left(\frac{n}2\right)}{4^s\,\pi^{\frac{n}2}\,\Gamma^2(s)},$$
being $\Gamma$ the Euler's $\Gamma$-function.
\end{proposition}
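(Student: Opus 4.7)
The plan is to express the solution $u$ through an explicit fractional Green function on the ball and then invoke Lemma \ref{GOALEM} essentially as a black box. The representation I need is
$$ u(x) \;=\; \kappa(n,s)\,\int_{B_1} f(z)\,G(x,z)\,dz \qquad \hbox{for every } x\in B_1, $$
where $G$ is the kernel defined in~\eqref{G def} and $\kappa(n,s)$ is precisely the constant appearing in the statement. This Green formula, recorded in the analytic form in~\cite{MR3461641} (and classically known from the potential theory of the isotropic $s$-stable process via the Riesz--Blumenthal--Getoor representation), is the only nontrivial input not already packaged in Lemma \ref{GOALEM}.

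First I would quote (or briefly derive) the representation above. Since $f\in C^\alpha(\R^n)$ and $f=0$ outside $B_1$, the integral on the right-hand side defines a function that is $C^{2s+\alpha}_{\rm loc}$ inside $B_1$, vanishes continuously outside $\overline{B_1}$, and solves $(-\Delta)^s v = f$ in $B_1$ in the classical sense; by uniqueness of the weak solution of the Dirichlet problem for $(-\Delta)^s$ it must coincide with $u$. The value of the prefactor $\kappa(n,s)=\Gamma(n/2)/(4^s\pi^{n/2}\Gamma^2(s))$ is forced by matching the singular behavior of $G(x,z)$ near the diagonal with the fundamental solution of $(-\Delta)^s$ on $\R^n$, using the specific normalizing constant $C(n,s)$ adopted in this paper for the fractional Laplacian.

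With the representation in hand, the rest is immediate. Setting $x=e+\eps\omega$ and dividing by $\eps^s$ gives
$$ \eps^{-s}\,u(e+\eps\omega) \;=\; \kappa(n,s)\,\eps^{-s}\int_{B_1} f(z)\,G(e+\eps\omega,z)\,dz, $$
and Lemma \ref{GOALEM} evaluates the limit of the right-hand side as $\eps\searrow 0$ to be
$$ \kappa(n,s)\,\int_{B_1} f(z)\,\frac{(-2\,e\cdot\omega)^s\,(1-|z|^2)^s}{s\,|z-e|^n}\,dz. $$
Pulling the $z$-independent factor $(-2\,e\cdot\omega)^s$ out of the integral yields exactly the identity claimed in Proposition \ref{123rtUOJLS}.

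The main obstacle is not the passage to the limit, which is automatic, but the identification of the correct multiplicative constant $\kappa(n,s)$ that makes $\kappa(n,s)\,G$ the Green kernel compatible with the precise normalization of $C(n,s)$ used here. I would not redo this computation from scratch: I would rather cite the explicit Green function on the ball from~\cite{MR3461641} and perform a short consistency check by comparing the local singularity $G(x,z)\sim c\,|x-z|^{2s-n}$ near the diagonal with the fundamental solution of $(-\Delta)^s$ on $\R^n$, which pins down the prefactor unambiguously.
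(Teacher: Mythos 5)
Your proposal is correct and follows essentially the same route as the paper: invoke the explicit Green representation $u(x)=\kappa(n,s)\int_{B_1}f(z)\,G(x,z)\,dz$ from \cite{MR3461641} (the paper also cites \cite{MR3161511} for continuity/viscosity of $u$ to legitimize the representation, where you instead argue via uniqueness of the weak solution), then evaluate at $x=e+\eps\omega$, multiply by $\eps^{-s}$, and pass to the limit using Lemma \ref{GOALEM}. No gap.
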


\begin{proof}
We know from Theorems 1 and 2 in
\cite{MR3161511} that $u$ is actually continuous in $\R^n$
and it is a viscosity solution of the equation. Also, by the fractional Green Representation Theorem
(see e.g. Theorem 3.2 in \cite{MR3461641} and the references therein), we have that
$$ u(e+\eps\omega)= \kappa(n,s)\,
\int_{B_1} f(z)\,G(e+\eps\omega,z)\,dz,$$
with $G$ as in \eqref{G def}. Hence, the desired result follows from \eqref{GOA}.
\end{proof}

As a simple consequence, we can characterize the boundary behavior of the first eigenfunction
for the fractional Laplacian with Dirichlet data (see e.g. Appendix A in \cite{MR3002745}
for a discussion on fractional eigenvalues).

\begin{corollary}\label{COR:GREEN}
Let $e\in \partial B_1$.
Let $\phi_\star$ be the first eigenfunction
for $(-\Delta)^s$, normalized to be positive and
such that $\|\phi_\star\|_{L^2(B_1)}=1$, and let $\lambda_\star>0$ be the corresponding eigenvalue.
Then,
\begin{equation}\label{HOES}
\|\phi_\star\|_{C^s(\R^n)}\le C,
\end{equation}
for some~$C>0$ depending only on~$n$ and~$s$, and
\begin{equation} \label{ih:AI67811:1}
\lim_{\eps\searrow0}\eps^{-s}\,\phi_\star(e+\eps\omega)
= \kappa_\star \,\lambda_\star\,(-\,e\cdot\omega)^s_+.\end{equation}
where~$\kappa_\star$ is as in~\eqref{HOES2}.
\end{corollary}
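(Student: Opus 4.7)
The plan is to apply Proposition~\ref{123rtUOJLS} to the eigenvalue equation. Since $\phi_\star$ is the first positive Dirichlet eigenfunction normalized in $L^2$, it solves
$$(-\Delta)^s \phi_\star = \lambda_\star \phi_\star \ \text{ in } B_1, \qquad \phi_\star = 0 \ \text{ in } \R^n\setminus B_1.$$
Setting $f:=\lambda_\star\phi_\star$, Proposition~\ref{123rtUOJLS} yields
$$\lim_{\eps\searrow 0}\eps^{-s}\,\phi_\star(e+\eps\omega)=\kappa(n,s)\,(-2e\cdot\omega)^s\,\int_{B_1}\lambda_\star\,\phi_\star(z)\,\frac{(1-|z|^2)^s}{s\,|z-e|^n}\,dz.$$
The hypothesis $e+\eps\omega\in B_1$ for all $\eps\in(0,\eps_o]$ forces $-e\cdot\omega>0$, since $|e+\eps\omega|^2=1+2\eps e\cdot\omega+\eps^2<1$. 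Hence $(-2e\cdot\omega)^s=2^s(-e\cdot\omega)^s=2^s(-e\cdot\omega)_+^s$, and comparing with the definition of $\kappa_\star$ in \eqref{HOES2}, the right-hand side simplifies to precisely $\kappa_\star\,\lambda_\star\,(-e\cdot\omega)_+^s$, which is \eqref{ih:AI67811:1}.

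In order to apply Proposition~\ref{123rtUOJLS}, I first need the H\"older bound \eqref{HOES}, both as part of the statement and to furnish the $C^\alpha$ assumption on the data. I would establish \eqref{HOES} in two steps. First, bootstrap the normalization $\|\phi_\star\|_{L^2(B_1)}=1$ to $L^\infty$: standard Moser iteration (or a De Giorgi type argument) applied to $(-\Delta)^s\phi_\star=\lambda_\star\phi_\star$, together with the mapping properties of the fractional Poisson kernel, yields $\|\phi_\star\|_{L^\infty(\R^n)}\le C$. Second, with $\lambda_\star\phi_\star$ now bounded, the optimal $C^s$ boundary regularity for Dirichlet problems for the fractional Laplacian in smooth domains, due to \cite{MR3168912}, gives $\|\phi_\star\|_{C^s(\R^n)}\le C$, completing the proof of \eqref{HOES} and supplying the hypothesis of Proposition~\ref{123rtUOJLS} with $\alpha:=s\in(0,1)$.

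The main obstacle is essentially the regularity estimate \eqref{HOES}, which requires the two-step bootstrap $L^2\to L^\infty\to C^s$; once this is in hand the limit formula follows by a direct substitution and an algebraic comparison of constants. A minor ancillary check is the assertion $\kappa_\star\in(0,+\infty)$ in \eqref{HOES2}: positivity is immediate because $\phi_\star>0$ in $B_1$ and the kernel is nonnegative, while finiteness follows from the pointwise bound $(1-|z|^2)^s\le C|z-e|^s$ (which is a consequence of $|1-|z||\le|z-e|$), so that the integrand is controlled by $C|z-e|^{s-n}$, which is integrable near $e$ since $s>0$.
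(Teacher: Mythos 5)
Your proposal takes the same route as the paper: reduce \eqref{ih:AI67811:1} to Proposition~\ref{123rtUOJLS} applied with $f:=\lambda_\star\phi_\star$, after first securing the H\"older bound \eqref{HOES}, and then match constants against the definition of $\kappa_\star$ in \eqref{HOES2}, which you do correctly. The only methodological difference is in how \eqref{HOES} is obtained: the paper cites the proof of Corollary~8 in \cite{MR3161511} for continuity/boundedness and then Proposition~1.1 in \cite{MR3168912} for the $C^s$ estimate, whereas you propose an $L^2\to L^\infty$ bootstrap (Moser/De Giorgi) followed by \cite{MR3168912}; both are acceptable, though your first step is stated at the level of ``standard'' rather than with a precise reference. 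Your verification that $\kappa_\star\in(0,+\infty)$ via $(1-|z|^2)^s\le 2^s|z-e|^s$ is fine (the paper instead deduces finiteness from \eqref{HOES}).

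The one genuine omission is the case $e\cdot\omega\ge 0$, which is precisely why \eqref{ih:AI67811:1} carries the positive part $(-e\cdot\omega)_+^s$: the corollary is meant for an arbitrary direction $\omega\in\partial B_1$, while Proposition~\ref{123rtUOJLS} only applies when $e+\eps\omega\in B_1$ for all small $\eps$, i.e.\ (as you yourself note) when $e\cdot\omega<0$. As written, your argument proves the limit only under that extra hypothesis. The missing case is easy but must be said: if $e\cdot\omega\ge0$ then $|e+\eps\omega|^2=1+2\eps\, e\cdot\omega+\eps^2>1$, so $e+\eps\omega\in\R^n\setminus B_1$ and $\phi_\star(e+\eps\omega)=0$ by the Dirichlet condition, whence $\eps^{-s}\phi_\star(e+\eps\omega)\to 0=\kappa_\star\lambda_\star(-e\cdot\omega)_+^s$. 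This two-case distinction is exactly how the paper completes the proof; add it and your argument is complete.
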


\begin{proof} The idea is that,
since $(-\Delta)^s \phi_\star=\lambda_\star \,\phi_\star$,
we can use Proposition \ref{123rtUOJLS} and get the desired result.
More precisely,
we have that $\phi_\star$
is $C^s(B_1)$ (see the proof of Corollary 8 in \cite{MR3161511}
to obtain the continuity and then Proposition 1.1 in \cite{MR3168912}
to get the H\"older estimate in~\eqref{HOES}).

Notice that, by~\eqref{HOES}, we have that the quantity~$\kappa_\star$ 
defined in~\eqref{HOES2} is finite, while
the positivity of $\phi_\star$ implies that $\kappa_\star>0$.

Also,  the H\"older estimate in~\eqref{HOES}
allows to use Proposition \ref{123rtUOJLS}
with $f:=\lambda_\star \,\phi_\star$.
Accordingly,
for any~$\omega\in \partial B_1$ for which there exists~$\eps_o>0$
such that~$
e+\eps\omega\in B_1$ for all $\eps\in(0,\eps_o]$, we have that
\begin{equation} \label{ih:AI67811:2}
\lim_{\eps\searrow0}\eps^{-s}\,\phi_\star(e+\eps\omega)
= \kappa_\star \,\lambda_\star\,(-\,e\cdot\omega)^s.\end{equation}
Now, we distinguish two cases: if~$e\cdot\omega<0$,
then
$$ |e+\eps\omega|^2 =1+2\eps e\cdot\omega+\eps^2<1$$
for small~$\eps$, and so~$e+\eps\omega\in B_1$ for small~$\eps$,
hence~\eqref{ih:AI67811:1} follows from~\eqref{ih:AI67811:2}.

If instead~$e\cdot\omega\ge0$, then~$e+\eps\omega\in\R^n\setminus B_1$,
thus~$\phi_\star(e+\eps\omega)=0$, which obviously implies~\eqref{ih:AI67811:1}
in this case.
\end{proof}

{F}rom Corollary~\ref{COR:GREEN}, we can now complete the proof
of Lemma~\ref{COR:GREEN2}, by arguing as follows:

\begin{proof}[Proof of Lemma~\ref{COR:GREEN2}]
Let~$\psi\in C^\infty_0(\R^n)$.
We write~$X=\rho\omega$, with~$\rho\ge0$ and~$\omega\in
S^{n-1}$. Notice that~$(\eps\rho)^{-s}\,|\phi_\star(e+\eps\rho\omega)|\le C$,
thanks to~\eqref{HOES}.

So, we use~\eqref{ih:AI67811:2} and the Dominated Convergence Theorem
to see that
\begin{eqnarray*}
&& \lim_{\eps\searrow0} \eps^{|\alpha|-s} \int_{\R^n} 
\partial^\alpha\phi_\star(e+\eps X)\, \psi(X)\,dX=
\lim_{\eps\searrow0} \int_{\R^n} 
\partial^\alpha_X \big( \eps^{-s}\phi_\star(e+\eps X)\big)\, \psi(X)\,dX\\
&&\qquad= (-1)^{|\alpha|}
\lim_{\eps\searrow0} \int_{\R^n} 
\eps^{-s}\phi_\star(e+\eps X)\, \partial^\alpha \psi(X)\,dX
\\&&\qquad=
(-1)^{|\alpha|}
\lim_{\eps\searrow0} \int_{0}^{+\infty}\,d\rho\,\int_{S^{n-1}}\,d\omega\, \rho^{n-1}
\rho^s \,(\eps\rho)^{-s}\phi_\star(e+\eps \rho\omega)\, \partial^\alpha \psi(\rho\omega)\\
&&\qquad= (-1)^{|\alpha|} \,\kappa_\star\,\lambda_\star \,
\int_{0}^{+\infty}\,d\rho\,\int_{S^{n-1}}\,d\omega\, \rho^{n-1}
\rho^s \,(-e\cdot\omega)_+^s\, \partial^\alpha \psi(\rho\omega)\\&&\qquad=
(-1)^{|\alpha|} \,\kappa_\star\,\lambda_\star \,
\int_{0}^{+\infty}\,d\rho\,\int_{S^{n-1}}\,d\omega\, \rho^{n-1}
(-e\cdot\rho\omega)^s_+\, \partial^\alpha \psi(\rho\omega)\\
&&\qquad= (-1)^{|\alpha|} \,\kappa_\star\,\lambda_\star \,
\int_{\R^n} 
(-e\cdot X)^s_+\, \partial^\alpha \psi(X)\,dX\\&&\qquad=
\kappa_\star\,\lambda_\star \,
\int_{\R^n} 
\partial^\alpha_X (-e\cdot X)^s_+\, \psi(X)\,dX
\\&&\qquad= 
(-1)^{|\alpha|}\,\kappa_\star\,\lambda_\star \,s\,(s-1)\dots(s-|\alpha|+1)\,
e_1^{\alpha_1}\dots e_n^{\alpha_n}\,\int_{\R^n} (-e\cdot X)_+^{s-|\alpha|}\,
\, \psi(X)\,dX,
\end{eqnarray*}
and this gives the desired result, since~$\psi$ is an arbitrary test function.
\end{proof}

\section*{Appendix B -- Green function computations}

Now, we present
the proof
of Lemma~\ref{GOALEM}. We recall that such result
gives
some precise asymptotics on the boundary behavior of the Green
function of the fractional Laplacian, which in turn
have been exploited
in Appendix~A to obtain precise boundary information
on the solutions of fractional Laplace equations.

\begin{proof}[Proof of Lemma~\ref{GOALEM}]
We remark that the condition $e+\eps\omega\in B_1$ for all $\eps\in(0,\eps_o]$
says that
$$ 1> |e+\eps\omega|^2 = 1 +\eps^2 +2\eps e\cdot\omega$$
and so in particular
\begin{equation}\label{iknAUUU1234AO}
-e\cdot\omega > \frac{\eps}{2}>0.
\end{equation}
{F}rom~\eqref{r0 def}, we have that
\begin{equation}\label{upohlkSR111TGHHHHSS}
r_0(e+\eps\omega,z)=
\frac{\eps\,(-\eps -2\,e\cdot\omega)\,(1-|z|^2)}{|z-e-\eps\omega|^2}.\end{equation}
In particular,
\begin{equation}\label{r0epshe}
r_0(e+\eps\omega,z)\le \frac{3\eps}{|z-e+\eps\omega|^2}.
\end{equation}
Moreover, using a Taylor binomial series,
$$ (t+1)^{-\frac{n}{2}} =\sum_{k=0}^{+\infty} \left( {-n/2}\atop{k}\right)\,t^k$$
and therefore
\begin{equation}\label{7tughjAserJS}
\frac{t^{s-1}}{
(t+1)^{\frac{n}{2}}}=\sum_{k=0}^{+\infty} \left( {-n/2}\atop{k}\right)\,t^{k+s-1}.\end{equation}
Since, by the bounds on the binomial coefficients, we have that
\begin{equation}\label{7tughjAserJS67icoe} \left| \left( {-n/2}\atop{k}\right)\right|\le C\,k^{\frac{n}{2}},\end{equation}
it follows from the root test
that the series in \eqref{7tughjAserJS} is uniformly convergent for any $t$
in a compact subset of $(-1,1)$.
In particular, if we set
\begin{equation}\label{7tughjAserJSr1}
r_1(x,z):= \min\left\{ r_0(x,z),\,\frac12\right\},\end{equation}
we can exchange the integration and summation signs and find that
$$ \int_0^{r_1(x,z)} \frac{t^{s-1}\,dt}{
(t+1)^{\frac{n}{2}}} = \sum_{k=0}^{+\infty} c_k\,\big(
r_1(x,z)\big)^{k+s},$$
with
$$ c_k:= \frac{1}{k+s}\left( {-n/2}\atop{k}\right).$$
Therefore, we have
\begin{equation}\label{ggGG} G(x,z)={\mathcal{G}}(x,z)+g(x,z),\end{equation}
with
\begin{eqnarray*}&&
{\mathcal{G}}(x,z):=
|z-x|^{2s-n}\,\sum_{k=0}^{+\infty} 
c_k\,\big(
r_1(x,z)\big)^{k+s}
\\ {\mbox{and }}&&
g(x,z):=
|z-x|^{2s-n}\int_{r_1(x,z)}^{r_0(x,z)} \frac{t^{s-1}\,dt}{
(t+1)^{\frac{n}{2}}}.\end{eqnarray*}
Notice that $g(x,z)=0$ if $r_0(x,z)\le1/2$. Also, if $r_0(x,z)> 1/2$,
$$ 0\le g(x,z)\le |z-x|^{2s-n}\int_{1/2}^{r_0(x,z)} \frac{t^{s-1}\,dt}{
t^{\frac{n}{2}}} \le 
\left\{
\begin{matrix}
C\, |z-x|^{2s-n} & {\mbox{ if }}n>2s,\\
C\,\log r_0(x,z) & {\mbox{ if }}n=2s,
\\ C \,|z-x|^{2s-n}
\,\big( r_0(x,z)\big)^{s-\frac{n}2}
& {\mbox{ if }}n<2s,
\end{matrix}\right.$$
for some $C>0$. Now we compute this expression in $x:=e+\eps\omega$.
Notice that the 
condition $r_0(e+\eps\omega,z)=r_0(x,z)> 1/2$, combined with \eqref{r0epshe},
says that
\begin{equation}\label{9uihKAsafdgfhSJJ}
|z-e-\eps\omega|^2 \le 9\eps.\end{equation}
As a consequence
\begin{equation}\label{9ihknAS}
\begin{split}
& \left| \int_{B_1} f(z)\,g(e+\eps\omega,z)\,dz\right| \le
\int_{ B_{3\sqrt\eps}(e+\eps\omega)} |f(z)|\,|g(e+\eps\omega,z)|\,dz\\
&\qquad\qquad
\le
\left\{
\begin{matrix}
C\, \int_{ B_{3\sqrt\eps}(e+\eps\omega)} |f(z)|\,|z-e+\eps\omega|^{2s-n}
\,dz & {\mbox{ if }}n>2s,\\
C\,\int_{ B_{3\sqrt\eps}(e+\eps\omega)} |f(z)|\,\log r_0(e+\eps\omega,z) \,dz& {\mbox{ if }}n=2s,
\\ C \,\int_{ B_{3\sqrt\eps}(e+\eps\omega)} |f(z)|\,|z-e+\eps\omega|^{2s-n}
\,\big( 
r_0(e+\eps\omega,z)\big)^{s-\frac{n}2}
\,dz& {\mbox{ if }}n<2s.
\end{matrix}\right.
\end{split}\end{equation}
Now, if $z\in B_{3\sqrt\eps}(e+\eps\omega)$, then $|z-e|\le4\sqrt\eps$ and so
\begin{equation}\label{yhiAJ:0}
|f(z)|\le
C\eps^{\frac\alpha2},\end{equation}
with $C>0$ depending on $f$.
Hence recalling \eqref{r0epshe}, after renaming $C>0$, we deduce from \eqref{9ihknAS} that
\begin{eqnarray*}
\left| \int_{B_1} f(z)\,g(e+\eps\omega,z)\,dz\right| &\le&
\left\{
\begin{matrix}
C\eps^{\frac\alpha2}\, \int_{ B_{3\sqrt\eps}(e+\eps\omega)} |z-e+\eps\omega|^{2s-n}
\,dz & {\mbox{ if }}n>2s,\\
C\eps^{\frac\alpha2}\,\int_{ B_{3\sqrt\eps}(e+\eps\omega)} \log 
\frac{3\eps}{|z-e+\eps\omega|^2}
\,dz& {\mbox{ if }}n=2s,
\\ C \eps^{\frac\alpha2+s-\frac{n}2}\,\int_{ B_{3\sqrt\eps}(e+\eps\omega)} 1\,dz& {\mbox{ if }}n<2s,
\end{matrix}\right.\\ &\le& C\eps^{\frac\alpha2+s}.
\end{eqnarray*}
This and \eqref{ggGG} give that
\begin{equation} \label{VIS-loj-1}
\int_{B_1} f(z)\,G(e+\eps\omega,z)\,dz=
\int_{B_1} f(z)\,{\mathcal{G}}(e+\eps\omega,z)\,dz+o(\eps^s).
\end{equation}
Now we consider the series defining ${\mathcal{G}}$ and we split the contribution
coming from the index $k=0$ from the ones coming from the indices $k\ge1$, namely we write
\begin{equation}\label{VIS-loj-2}
\begin{split}
& {\mathcal{G}}(x,z)=
{\mathcal{G}}_0(x,z)+{\mathcal{G}}_1(x,z)\\
{\mbox{with }}\quad&{\mathcal{G}}_0(x,z):=
\frac{|z-x|^{2s-n}}{s}\,\big(
r_1(x,z)\big)^{s} \\ {\mbox{and }}\quad&
{\mathcal{G}}_1(x,z):=
|z-x|^{2s-n}\,\sum_{k=1}^{+\infty} c_k
\,\big(
r_1(x,z)\big)^{k+s}.
\end{split}\end{equation}
So, we use \eqref{7tughjAserJSr1} and~\eqref{yhiAJ:0} and obtain that
\begin{equation}\label{8yhikARYFHJVGVSIhiIHG}
\begin{split}
& \left| \int_{B_1\cap B_{3\sqrt\eps}(e+\eps\omega) } f(z)\,{\mathcal{G}}_1(e+\eps\omega,z)\,dz \right|
\le 
\int_{ B_{3\sqrt\eps}(e+\eps\omega) } |f(z)|\,{\mathcal{G}}_1(e+\eps\omega,z)\,dz\\
&\qquad\le C\eps^{\frac\alpha2}
\int_{ B_{3\sqrt\eps}(e+\eps\omega) } 
|z-e-\eps\omega|^{2s-n}\,\sum_{k=1}^{+\infty} 
|c_k|\,\big(
r_1(e+\eps\omega,z)\big)^{k+s}
\,dz \\
&\qquad\le C\eps^{\frac\alpha2}
\int_{ B_{3\sqrt\eps}(e+\eps\omega) } 
|z-e-\eps\omega|^{2s-n}\,\sum_{k=1}^{+\infty} |c_k|
\,\big(
1/2\big)^{k+s}
\,dz \\ &\qquad\le C\eps^{\frac\alpha2}
\int_{ B_{3\sqrt\eps}(e+\eps\omega) }
|z-e-\eps\omega|^{2s-n}
\,dz\\ &\qquad\le C\eps^{\frac\alpha2+s},
\end{split}\end{equation}
up to renaming $C>0$.
On the other hand, 
$$ |z|=
|e+\eps\omega+z-e-\eps\omega|
\ge |e+\eps\omega| -|z-e-\eps\omega|\ge 1-\eps-|z-e-\eps\omega|$$
and therefore
$$ |f(z)|\le C\,\big( 1-|z|\big)^\alpha\le
C\,\big( \eps+|z-e-\eps\omega|\big)^\alpha.$$
In particular, if $|z-e-\eps\omega|>3\sqrt\eps$, then
\begin{equation}\label{9uoihknSHBS78A} 
|f(z)|\le 
C\,|z-e-\eps\omega|^\alpha.\end{equation}
Also, using \eqref{r0epshe} and~\eqref{7tughjAserJSr1}, for any $k\ge1$
\begin{equation*}
\begin{split}& \big(
r_1(e+\eps\omega,z)\big)^{k+s}=
\big(r_1(e+\eps\omega,z)\big)^{s+\frac\alpha4}\,\big(r_1(e+\eps\omega,z)\big)^{k-\frac\alpha4}
\\ &\qquad\le \big(
r_0(e+\eps\omega,z)\big)^{s+\frac\alpha4}
\left(
\frac12\right)^{k-\frac\alpha4} \le
\frac{C\,\eps^{s+\frac\alpha4}}{2^k\,
|z-e-\eps\omega|^{2s+\frac\alpha2} }.\end{split}\end{equation*}
This and \eqref{9uoihknSHBS78A} give that, if $z\in
B_1\setminus B_{3\sqrt\eps}(e+\eps\omega)$, then
\begin{eqnarray*}
&& \left| f(z)\,{\mathcal{G}}_1(e+\eps\omega,z)\right|\\ &\le&
C\,|z-e-\eps\omega|^{\alpha+2s-n}\,\sum_{k=1}^{+\infty}
|c_k|\,\big(
r_1(e+\eps\omega,z)\big)^{k+s} \\
&\le& C\eps^{s+\frac\alpha4}\,|z-e-\eps\omega|^{\frac\alpha2-n}\,\sum_{k=1}^{+\infty}
\frac{|c_k|}{2^k},\end{eqnarray*}
and the latter series is convergent, thanks to \eqref{7tughjAserJS67icoe}.
This implies that
\begin{eqnarray*}
&& \left| \int_{B_1\setminus B_{3\sqrt\eps}(e+\eps\omega) } f(z)\,{\mathcal{G}}_1(e+\eps\omega,z)\,dz \right|
\le C\eps^{s+\frac\alpha4}\,\int_{B_1\setminus B_{3\sqrt\eps}(e+\eps\omega) }  
|z-e-\eps\omega|^{\frac\alpha2-n}\,dz\\
&&\qquad\le C\eps^{s+\frac\alpha4}\,\int_{B_1}
|z-e-\eps\omega|^{\frac\alpha2-n}\,dz\le C\eps^{s+\frac\alpha4}.
\end{eqnarray*}
By this and \eqref{8yhikARYFHJVGVSIhiIHG}, we conclude that
$$ \int_{B_1} f(z)\,{\mathcal{G}}_1(e+\eps\omega,z)\,dz=
o(\eps^s).$$
Hence, we insert this information into \eqref{VIS-loj-1} and, recalling \eqref{VIS-loj-2},
we obtain
\begin{equation} \label{VIS-loj-3}
\int_{B_1} f(z)\,G(e+\eps\omega,z)\,dz=
\int_{B_1} f(z)\,{\mathcal{G}}_0(e+\eps\omega,z)\,dz+o(\eps^s).
\end{equation}
Now we define
\begin{eqnarray*}
&& {\mathcal{D}}_1:= \{ z\in B_1 {\mbox{ s.t. }} r_0(e+\eps\omega,z)>1/2\}\\ {\mbox{ and }}
&& {\mathcal{D}}_2:= \{ z\in B_1 {\mbox{ s.t. }} r_0(e+\eps\omega,z)\le 1/2\}.\end{eqnarray*}
If $z\in {\mathcal{D}}_1$, then \eqref{9uihKAsafdgfhSJJ} holds true, and so we can use
\eqref{yhiAJ:0}, to find that
$$ \big| f(z)\,{\mathcal{G}}_0(e+\eps\omega,z)\big|\le
C\eps^{\frac\alpha2}|z-e+\eps\omega|^{2s-n}.$$
Consequently, recalling~\eqref{9uihKAsafdgfhSJJ},
\begin{eqnarray*}
&&\left|
\int_{{\mathcal{D}}_1} f(z)\,{\mathcal{G}}_0(e+\eps\omega,z)\,dz\right|\le
C\eps^{\frac\alpha2}\,
\int_{B_{3\sqrt\eps}(e+\eps\omega)} |z-e+\eps\omega|^{2s-n}\,dz=C\eps^{\frac\alpha2+s},
\end{eqnarray*}
up to renaming $C>0$ once again. In this way,
formula \eqref{VIS-loj-3} reduces to
\begin{equation} \label{VIS-loj-4}
\int_{B_1} f(z)\,G(e+\eps\omega,z)\,dz=
\int_{{\mathcal{D}}_2} f(z)\,{\mathcal{G}}_0(e+\eps\omega,z)\,dz+o(\eps^s).
\end{equation}
Now, by \eqref{7tughjAserJSr1} and \eqref{upohlkSR111TGHHHHSS},
if $z\in{{\mathcal{D}}_2}$,
$$ {\mathcal{G}}_0(e+\eps\omega,z) =
\frac{|z-e-\eps\omega|^{2s-n}}{s}\,\big(
r_0(e+\eps\omega,z)\big)^{s} =
\frac{ \eps^s\,(-\eps -2\,e\cdot\omega)^s\,(1-|z|^2)^s }{
s\,|z-e-\eps\omega|^n }
.$$
Hence, \eqref{VIS-loj-4} gives that
\begin{equation} \label{VIS-loj-5}
\begin{split}&\lim_{\eps\searrow0}\eps^{-s}\int_{B_1} f(z)
\,G(e+\eps\omega,z)\,dz\\
=\,&\lim_{\eps\searrow0}
\int_{ \{
{2\eps\,(-\eps -2\,e\cdot\omega)\,(1-|z|^2)} \le{|z-e-\eps\omega|^2}
\} } f(z)\,
\frac{(-\eps -2\,e\cdot\omega)^s\,(1-|z|^2)^s }{
s\,|z-e-\eps\omega|^n }\,dz.\end{split}
\end{equation}
Now, we show the following uniform integrability condition: we set
\begin{equation*}
F_\eps(z):= \left\{
\begin{matrix}
f(z)\,
\displaystyle\frac{(-\eps -2\,e\cdot\omega)^s\,(1-|z|^2)^s }{
s\,|z-e-\eps\omega|^n } & {\mbox{ if }}
{2\eps\,(-\eps -2\,e\cdot\omega)\,(1-|z|^2)} \le{|z-e-\eps\omega|^2},
\\ \,&\,
\\ 0 & {\mbox{ otherwise}},
\end{matrix}
\right.\end{equation*}
and we prove that for any $\eta>0$ there exists $\delta>0$ 
(depending on $\eta$, $e$ and $\omega$, but
independent of $\eps$) such that,
for any $E\subset\R^{\dpic}$ with $|E|\le\delta$, we have
\begin{equation}\label{9iAJJJJ1425637849t}
\int_{B_1\cap E} \big| F_\eps(z)\big|\,dz\le\eta.
\end{equation}
To this aim, we take $E$ as above and $$\rho:= c_\star\,\eps,$$ with $c_\star\in\left(0,
\frac{1}{10}\right)$ to be conveniently chosen in the sequel (also in dependence of $\omega$
and $e$),
and we set $E_1:=E\cap B_{\rho}(e+\eps\omega)$,
$E_2:=E\setminus E_1$.

We claim that
\begin{equation}\label{iHKA12345ykscb:1}
{\mbox{$E_1$
is empty.}}\end{equation}
For this, we argue by contradiction: if there existed $z\in E_1$, then
\begin{eqnarray*}
&& {\eps\,(-\,e\cdot\omega)\,(1-|z|^2)} 
\le
{2\eps\,(-\eps -2\,e\cdot\omega)\,(1-|z|^2)} 
\le{|z-e-\eps\omega|^2}\le\rho^2,
\end{eqnarray*}
if $\eps$ is small enough in dependence of the fixed $e$ and $\omega$ (recall \eqref{iknAUUU1234AO}),
and thus \begin{equation}\label{iHKA12345ykscb:2}
1-|z|^2 \le \frac{C\rho^2}{\eps},\end{equation}
with $C>0$ also depending on $e$ and $\omega$. 
On the other hand, we have that~$E_1\subseteq B_{\rho}(e+\eps\omega)$,
therefore
$$ |z|\le |e+\eps\omega| +|z-e-\eps\omega|\le \sqrt{1+\eps^2+2\eps e\cdot\omega}+\rho
\le 1 -\frac{-\eps e\cdot\omega}{10}+C\eps^2+\rho,$$
and so
$$ |z|^2 \le 1 -\frac{\eps e\cdot\omega}{5}+C\eps^2+\rho^2.$$
This is a contradiction with \eqref{iHKA12345ykscb:2} if $c_\star$ is appropriately small
and so \eqref{iHKA12345ykscb:1} is proved.

So, from now on,~$c_\star$ is fixed suitably small. 
We observe that if $z\in E_2$ then
$$ |z-e-\eps\omega| \ge \rho = c_\star \eps,$$
and consequently
\begin{equation}\label{9iAJJJJ1425637849t=2}
\int_{B_1\cap E_2} \big| F_\eps(z)\big|\,dz\le 
\int_{ {B_1\cap E}\atop{ \{ |z-e-\eps\omega| \ge c_\star \eps\}}}
\frac{C\,(1-|z|)^{s+\alpha} }{
|z-e-\eps\omega|^n }\,dz.
\end{equation}
Now, we distinguish two cases, either $\delta\le \eps^{2n}$
or $\delta>\eps^{2n}$.
If $\delta\le \eps^{2n}$, we
use \eqref{9iAJJJJ1425637849t=2} to get that
\begin{equation}\label{9iAJJJJ1425637849t=3}
\int_{B_1\cap E_2} \big| F_\eps(z)\big|\,dz\le
\int_{ {B_1\cap E}\atop{ \{ |z-e-\eps\omega| \ge c_\star \eps\}}}
\frac{C }{
\eps^n }\,dz\le \frac{C \delta}{
\eps^n } \le C\,\sqrt\delta.\end{equation}
If instead 
\begin{equation}\label{u9ojkSsdfHH}
\delta>\eps^{2n},\end{equation} we observe that
$$ |z-e-\eps\omega|\ge 1-|z|-\eps$$
and so we deduce from \eqref{9iAJJJJ1425637849t=2} that
\begin{equation}\label{8YIHAHHH}
\begin{split}
& \int_{B_1\cap E_2} \big| F_\eps(z)\big|\,dz\le
\int_{ {B_1\cap E}\atop{ \{ |z-e-\eps\omega| \ge c_\star\eps\}}}
\frac{C\,(|z-e-\eps\omega|+\eps)^{s+\alpha} }{
|z-e-\eps\omega|^n }\,dz\\
&\qquad \le
C\,\int_{ {B_1\cap E}\atop{ \{ |z-e-\eps\omega| \ge c_\star \eps\}}}
\frac{|z-e-\eps\omega|^{s+\alpha} }{
|z-e-\eps\omega|^n }\,dz+
C\,\int_{ {B_1\cap E}\atop{ \{ |z-e-\eps\omega| \ge c_\star \eps\}}}
\frac{\eps^{s+\alpha} }{
|z-e-\eps\omega|^n }\,dz\\&\qquad=:I_1+I_2.
\end{split}\end{equation}
To estimate $I_1$, we split into
\begin{eqnarray*}&&
I_{1,1}:= C\,\int_{ {B_1\cap E}\atop{ \{ c_\star \eps \le |z-e-\eps\omega| \le\delta^{1/{2n}} \}}}
|z-e-\eps\omega|^{s+\alpha-n} 
\,dz\\ {\mbox{and }}&&
I_{1,2}:=
C\,\int_{ {B_1\cap E}\atop{ \{ |z-e-\eps\omega| >\delta^{1/{2n}}\}}}
|z-e-\eps\omega|^{s+\alpha-n} \,dz
.\end{eqnarray*}
Using polar coordinates, we find that
\begin{equation}\label{UOJAJJJA5678JA12345}
I_{1,1}\le C\, \int_{c_\star \eps}^{ \delta^{1/{2n}} } 
t^{n-1}\,t^{s+\alpha-n}
\,dt \le C\, \left[\left( \delta^{\frac1{2n}}\right)^{s+\alpha}-
\left( c_\star \eps\right)^{s+\alpha}\right]\le C\,\delta^{\frac{s+\alpha}{2n}}.
\end{equation}
In addition,
$$ I_{1,2}\le
C\,\int_{ {B_1\cap E}\atop{ \{ |z-e-\eps\omega| >\delta^{1/{2n}}\}}}
|z-e-\eps\omega|^{s-n}\,dz \le 
C\,\int_E
\delta^{{\frac{s-n}{2n}} }\,dz \le C\delta^{1+{\frac{s-n}{2n}}}=
C\delta^{{\frac{s+n}{2n}}}.$$
This and \eqref{UOJAJJJA5678JA12345} say that
\begin{equation}\label{UOJAJJJA5678JA12345BIS}
I_{1}\leq 
C\,\delta^{\frac{s+\alpha}{2n}}+C\delta^{{\frac{s+n}{2n}}}.
\end{equation}
Moreover,
$$ I_2 \le C\eps^{s+\alpha}\,\int_{c_\star\eps}^2\frac{t^{n-1}}{t^N}\,dt\le
C\eps^{s+\alpha}\,|\log\eps|\le C\eps^s\le C\delta^{\frac{s}{2n}},$$
thanks to \eqref{u9ojkSsdfHH}. Hence, using this and \eqref{UOJAJJJA5678JA12345BIS},
and recalling \eqref{8YIHAHHH}, we obtain that
$$ \int_{B_1\cap E_2} \big| F_\eps(z)\big|\,dz\le
I_1+I_2\le C\,\delta^{\frac{s+\alpha}{2n}}+C\delta^{{\frac{s+n}{2n}}}+
C\delta^{\frac{s}{2n}}.$$
We now combining this estimate, which is coming from the case in \eqref{u9ojkSsdfHH},
with \eqref{9iAJJJJ1425637849t=3}, which was coming from the complementary case,
and we see that, in any case,
$$ \int_{B_1\cap E_2} \big| F_\eps(z)\big|\,dz\le C\,\delta^\kappa,$$
for some $\kappa>0$. {F}rom this and
\eqref{iHKA12345ykscb:1}, we obtain that
$$ \int_{B_1\cap E} \big| F_\eps(z)\big|\,dz\le C\,\delta^\kappa,$$
Then, choosing $\delta$ suitably small with respect to $\eta$,
we establish \eqref{9iAJJJJ1425637849t}, as desired.

Notice also that $F_\eps$ converges pointwise 
to $ f(z)\, \frac{(-2\,e\cdot\omega)^s\,(1-|z|^2)^s 
}{ s\,|z-e|^n }$. Hence, using \eqref{VIS-loj-5},
\eqref{9iAJJJJ1425637849t} and the Vitali Convergence Theorem, we conclude that
\begin{eqnarray*}
\lim_{\eps\searrow0}\eps^{-s}\int_{B_1} f(z)\,G(e+\eps\omega,z)\,dz &=&
\lim_{\eps\searrow0} \int_{B_1} F_\eps(z)\,dz\\
&=& \int_{B_1} f(z)\, \frac{(-2\,e\cdot\omega)^s\,(1-|z|^2)^s
}{ s\,|z-e|^n }\,dz,
\end{eqnarray*}
which establishes \eqref{GOA}.
\end{proof}

\end{appendix}

\section*{References}
\begin{biblist}[\normalsize]

\bib{MR3461641}{article}{
   author={Bucur, Claudia},
   title={Some observations on the Green function for the ball in the
   fractional Laplace framework},
   journal={Commun. Pure Appl. Anal.},
   volume={15},
   date={2016},
   number={2},
   pages={657--699},
   issn={1534-0392},
   review={\MR{3461641}},
   doi={10.3934/cpaa.2016.15.657},
}

\bib{bucur-caputo}{article}{
   author={Bucur, Claudia},
   title={Local density of Caputo-stationary functions in the space of smooth functions},
   journal={To appear in ESAIM Control Optim. Calc. Var.},
}

\bib{logistic}{article}{
   author={Caffarelli, Luis},
   author={Dipierro, Serena},
   author={Valdinoci, Enrico},
   title={A logistic equation with nonlocal interactions},
   journal={Kinet. Relat. Models},
   volume={10},
   date={2017},
   number={1},
   pages={141--170},
   issn={1937-5093},
   review={\MR{3579567}},
   doi={10.3934/krm.2017006},
}

\bib{all-fcts-are-sh}{article}{
   author={Dipierro, Serena},
   author={Savin, Ovidiu},
   author={Valdinoci, Enrico},
   title={All functions are locally $s$-harmonic up to a small error},
   journal={J. Eur. Math. Soc. (JEMS)},
   volume={19},
   date={2017},
   number={4},
   pages={957--966},
   issn={1435-9855},
   review={\MR{3626547}},
   doi={10.4171/JEMS/684},
}

\bib{massaccesi}{article}{
   author={Massaccesi, Annalisa},
   author={Valdinoci, Enrico},
   title={Is a nonlocal diffusion strategy convenient for biological
   populations in competition?},
   journal={J. Math. Biol.},
   volume={74},
   date={2017},
   number={1-2},
   pages={113--147},
   issn={0303-6812},
   review={\MR{3590678}},
   doi={10.1007/s00285-016-1019-z},
}

\bib{MR3168912}{article}{
   author={Ros-Oton, Xavier},
   author={Serra, Joaquim},
   title={The Dirichlet problem for the fractional Laplacian: regularity up
   to the boundary},
   language={English, with English and French summaries},
   journal={J. Math. Pures Appl. (9)},
   volume={101},
   date={2014},
   number={3},
   pages={275--302},
   issn={0021-7824},
   review={\MR{3168912}},
   doi={10.1016/j.matpur.2013.06.003},
}

\bib{MR3002745}{article}{
   author={Servadei, Raffaella},
   author={Valdinoci, Enrico},
   title={Variational methods for non-local operators of elliptic type},
   journal={Discrete Contin. Dyn. Syst.},
   volume={33},
   date={2013},
   number={5},
   pages={2105--2137},
   issn={1078-0947},
   review={\MR{3002745}},
}

\bib{MR3161511}{article}{
   author={Servadei, Raffaella},
   author={Valdinoci, Enrico},
   title={Weak and viscosity solutions of the fractional Laplace equation},
   journal={Publ. Mat.},
   volume={58},
   date={2014},
   number={1},
   pages={133--154},
   issn={0214-1493},
   review={\MR{3161511}},
}
	
\end{biblist}

\end{document}